\DeclareFontFamily{OT1}{pzc}{}
\DeclareFontShape{OT1}{pzc}{m}{it}{<-> s * [1.10] pzcmi7t}{}
\DeclareMathAlphabet{\mathpzc}{OT1}{pzc}{m}{it}
\let\originalleft\left
\let\originalright\right
\renewcommand{\left}{\mathopen{}\mathclose\bgroup\originalleft}
\renewcommand{\right}{\aftergroup\egroup\originalright}
\begin{document}

\def\b0{{\bf 0}}
\def\ee{\varepsilon}
\def\co{\mathpzc{o}}
\def\cS{\mathcal{S}}
\def\rD{{\rm D}}
\def\rT{{\rm T}}
\def\sphMeas{\textnormal{sph-meas}}

\newcommand{\removableFootnote}[1]{}

\newtheorem{theorem}{Theorem}[section]
\newtheorem{corollary}[theorem]{Corollary}
\newtheorem{lemma}[theorem]{Lemma}
\newtheorem{proposition}[theorem]{Proposition}
\newtheorem{conjecture}[theorem]{Conjecture}

\theoremstyle{definition}
\newtheorem{definition}{Definition}[section]
\newtheorem{example}[definition]{Example}

\theoremstyle{remark}
\newtheorem{remark}{Remark}[section]



\title{
Chaotic attractors from border-collision bifurcations: stable border fixed points and determinant-based Lyapunov exponent bounds.
}
\author{
D.J.W.~Simpson\\\\
School of Fundamental Sciences\\
Massey University\\
Palmerston North\\
New Zealand
}
\maketitle



\begin{abstract}

The collision of a fixed point with a switching manifold (or border) in a piecewise-smooth map can create many different types of invariant sets. This paper explores two techniques that, combined, establish a chaotic attractor is created in a border-collision bifurcation in $\mathbb{R}^d$ $(d \ge 1)$. First, asymptotic stability of the fixed point at the bifurcation is characterised and shown to imply a local attractor is created. Second, a lower bound on the maximal Lyapunov exponent is obtained from the determinants of the one-sided Jacobian matrices associated with the fixed point. Special care is taken to accommodate points whose forward orbits intersect the switching manifold as such intersections can have a stabilising effect. The results are applied to the two-dimensional border-collision normal form focusing on parameter values for which the map is piecewise area-expanding.

\end{abstract}

\section{Introduction}
\label{sec:intro}
\setcounter{equation}{0}

A map on $\mathbb{R}^d$ $(d \ge 1)$ is a discrete-time dynamical system
\begin{equation}
x \mapsto f(x),
\label{eq:fGeneral}
\end{equation}
where $f : \mathbb{R}^d \to \mathbb{R}^d$.
Given an initial state $x \in \mathbb{R}^d$,
the $n^{\rm th}$ iterate $f^n(x)$ represents the state of the system after $n$ time steps.
We are usually most interested in the long-time (large $n$) behaviour of typical $x \in \mathbb{R}^d$
which is governed by the {\em attractors} of $f$.
By attractors we mean topological attractors:
invariant sets that attract a neighbourhood of initial points
and satisfy some indivisibility property (e.g.~contain a dense orbit) \cite{El08,GuHo86}.
This paper concerns attractors of maps that are piecewise-smooth.
Such maps have different functional forms in different subsets of $\mathbb{R}^d$
and arise naturally when modelling physical phenomena with abrupt events,
such as mechanical systems with impacts \cite{DiBu08},
control systems with relays \cite{ZhMo08},
and social processes with decisions \cite{PuSu06}.

As the parameters of a map are varied, attractors (and invariant sets more generally)
undergo fundamental changes at {\em bifurcations} \cite{Ku04}.
The simplest type of bifurcation novel to piecewise-smooth maps is a border-collision bifurcation (BCB).
A BCB occurs when a fixed point collides with a switching manifold (where the functional form of the map changes),
see Fig.~\ref{fig:qqGl16e}.
This paper concerns BCBs for maps $f$ that, at least locally, are continuous across a smooth switching manifold
and away from the switching manifold $D f$ is continuous and bounded, see \cite{Si16} for a review.

\begin{figure}[b!]
\begin{center}
\setlength{\unitlength}{1cm}
\begin{picture}(13,4.45)
\put(0,0){\includegraphics[height=4cm]{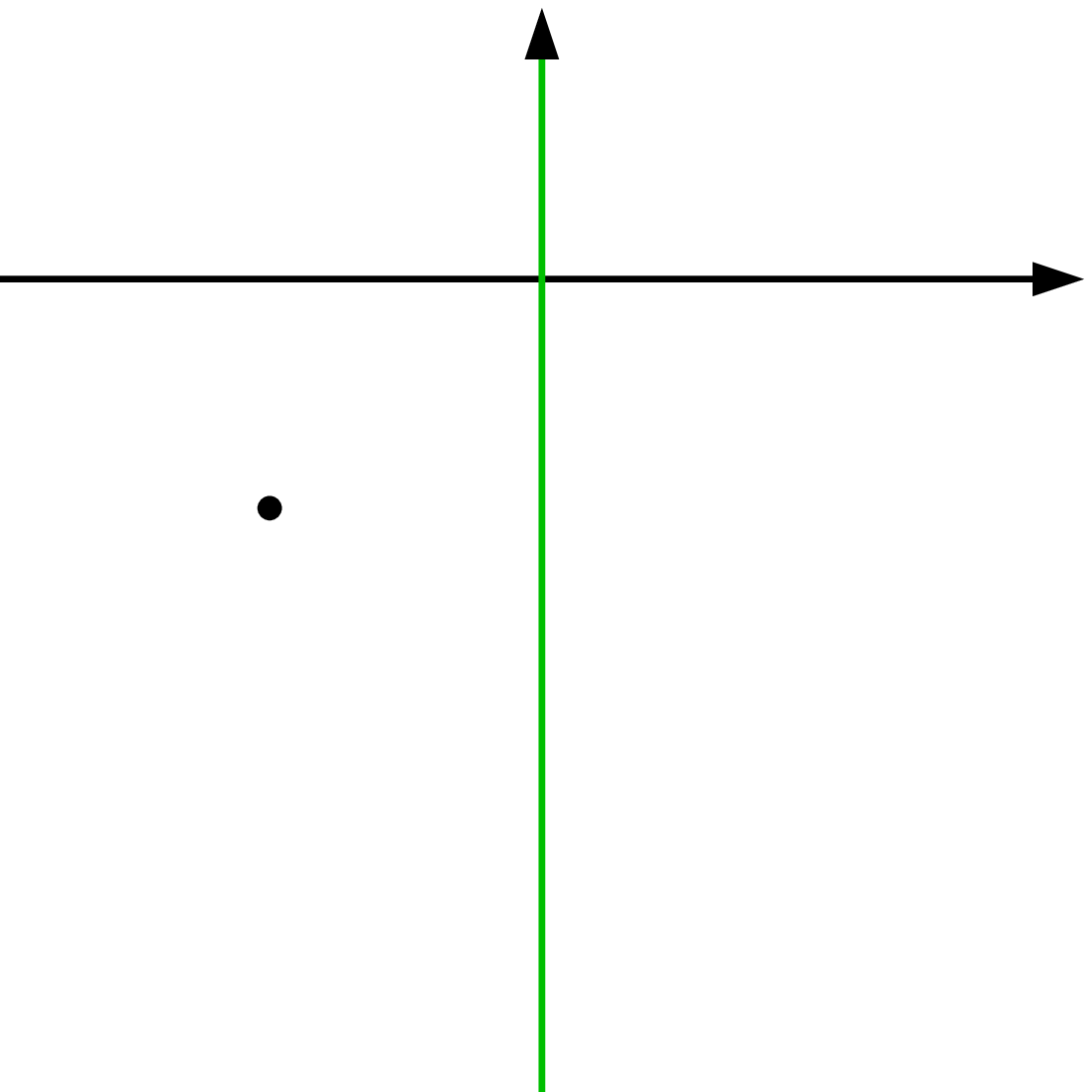}}
\put(4.5,0){\includegraphics[height=4cm]{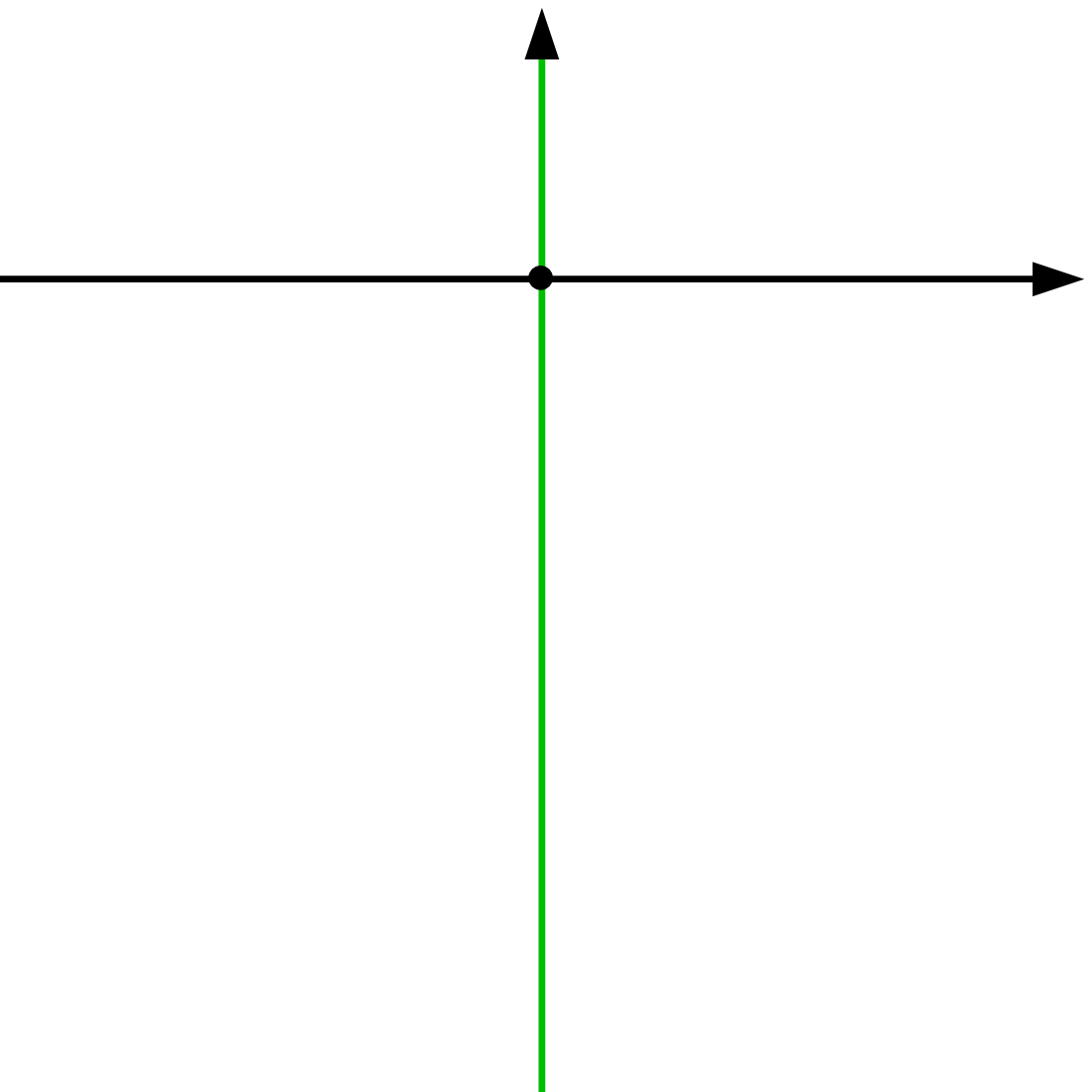}}
\put(9,0){\includegraphics[height=4cm]{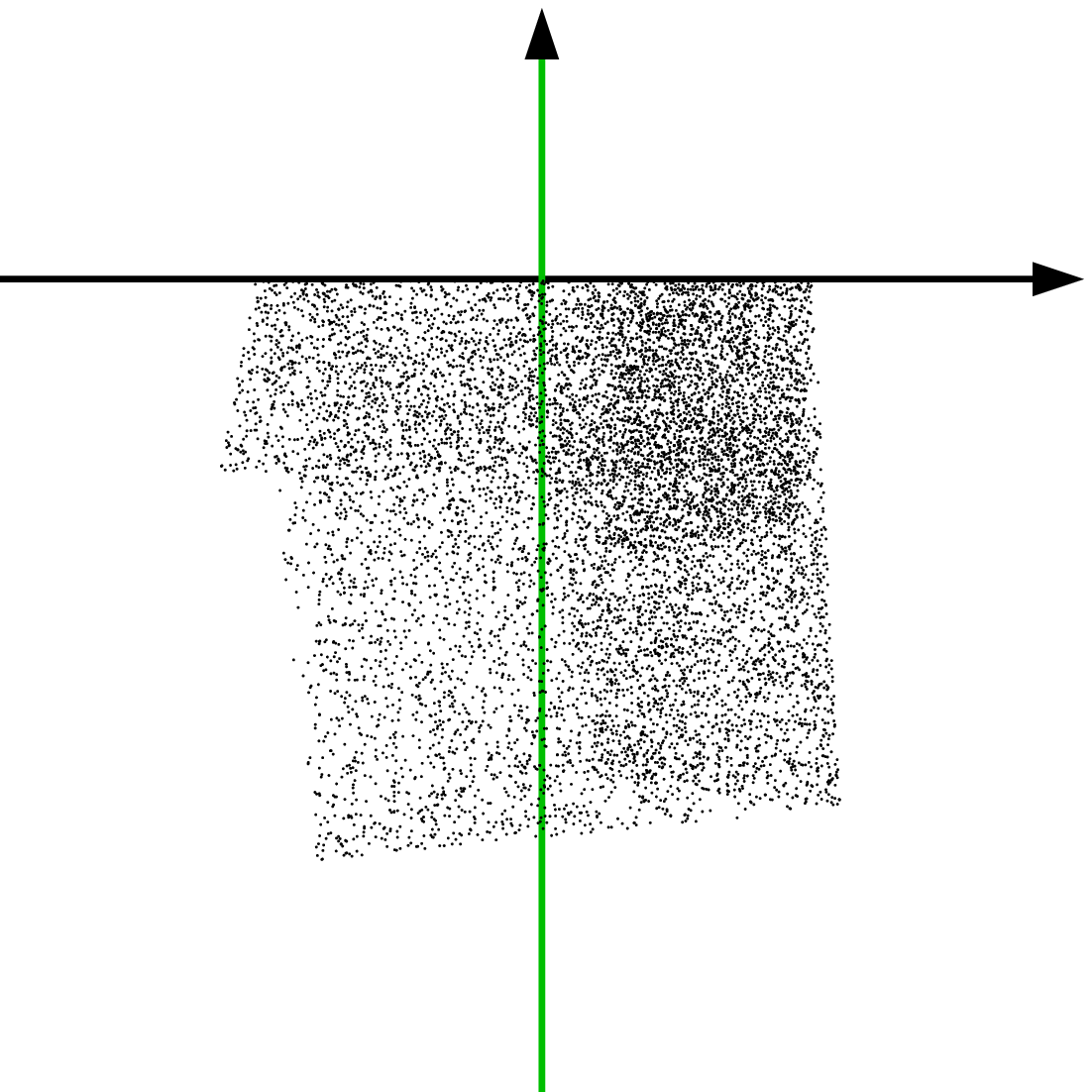}}
\put(3.43,3.13){\footnotesize $x_1$}
\put(2.1,3.75){\footnotesize $x_2$}
\put(1.56,4.15){\footnotesize $\mu < 0$}
\put(7.93,3.13){\footnotesize $x_1$}
\put(6.6,3.75){\footnotesize $x_2$}
\put(6.06,4.15){\footnotesize $\mu = 0$}
\put(12.43,3.13){\footnotesize $x_1$}
\put(11.1,3.75){\footnotesize $x_2$}
\put(10.56,4.15){\footnotesize $\mu > 0$}
\end{picture}
\caption{
\label{fig:qqGl16e}
Phase portraits of the two-dimensional border-collision normal form \eqref{eq:bcnf}
with $(\tau_L,\delta_L,\tau_R,\delta_R) = (0.05,-0.85,0.1,1.95)$, as in \cite{Gl16e}.
The map is continuous but non-differentiable on the switching manifold $x_1 = 0$ (green).
As the value of the parameter $\mu$ is increased
a stable fixed point collides with the switching manifold
when $\mu = 0$ and a chaotic attractor is created.
} 
\end{center}
\end{figure}

BCBs can create periodic, quasiperiodic, and chaotic attractors.
The internal dynamics of DC/DC power converters, for example, can exhibit a
sudden transition to chaos via a BCB \cite{DiGa98}.
Naturally one would like to know what attractors are created in a given BCB,
but in general this is an extremely difficult problem.
There are infinitely many possibilities
even simply for the number of stable periodic solutions \cite{Si14}.
For this reason it seems that in order for us to usefully expand upon the state-of-the-art theory for BCBs,
rather than pursue a detailed classification we should search for conditions
for certain types of behaviour to occur \cite{Gl17c},
and this philosophy directs the present work.

This paper addresses the following problem:
{\em under what conditions is a chaotic attractor created in a BCB}?
Arguably the simplest geometric tool that can help us here is a trapping region
(a compact set $\Omega \subset \mathbb{R}^d$ for which $f(\Omega)$ is contained in the interior of $\Omega$)
as such regions necessarily contain attractors.
To then demonstrate chaos a wide variety of techniques have been employed.
For two-dimensional maps, Misiurewicz \cite{Mi80} used the expansion and folding behaviour of unstable manifolds
to prove there exists an attractor on which the map is transitive (and hence chaotic in this sense).
Collet and Levy \cite{CoLe84} subsequently constructed an invariant measure on this attractor and
derived its basic ergodic properties.
Recently Misiurewicz's result was generalised to the 
well-known robust chaos parameter regime of \cite{BaYo98}
via the construction of a forward invariant expanding cone in tangent space \cite{GlSi19}.
Glendinning \cite{Gl17} used a theorem of Young \cite{Yo85}
to show that in part of this regime the map has an SRB measure.
Also in \cite{Gl14,Gl15b}, Glendinning applied a dimensionality result for piecewise-expanding maps \cite{Bu99,Ts01}
to show that BCBs can create chaotic attractors of dimension equal to that of the phase space,
as in Fig.~\ref{fig:qqGl16e}.

In this paper chaotic attractors are obtained through two alternate techniques.
First the existence of a local attractor is established from the stability of the fixed point at the BCB.
Then a lower bound on the maximal Lyapunov exponent is given in terms of
the determinants of the associated one-sided Jacobian matrices.
These techniques may allow us to prove the existence of a chaotic attractor
in parameter regions where other methods are inconclusive.

\subsection{Main ideas and overview}
\label{sub:overview}

The remainder of this paper is organised as follows.
We first briefly summarise key notation in \S\ref{sub:notation}.
Then in \S\ref{sec:stability} we explain how the stability of a fixed point at a BCB in a map $f$ on $\mathbb{R}^d$
can be addressed by studying the stability of the origin for a piecewise-linear map.
In general the stability problem for piecewise-linear maps is remarkably complicated \cite{BlTs99}.
Following \cite{AtLa14} we show how stability can be characterised by iterating
a compact set $\Omega$ that contains the origin in its interior, Theorem \ref{th:stability}.
By choosing $\Omega$ to be a polytope this provides an effective
numerical procedure by which stability can be established.
We also prove that if the origin is asymptotically stable,
then the corresponding BCB must create a local attractor, Theorem \ref{th:attractingSet}.

To show that attractors are chaotic we use {\em Lyapunov exponents}.
As discussed in \S\ref{sub:lyapSmooth}, for a smooth map $f$ on $\mathbb{R}^d$ Lyapunov exponents are defined by
\begin{equation}
\lambda(x,v) = \lim_{n \to \infty} \frac{1}{n} \ln \left( \left\| D f^n(x) v \right\| \right),
\label{eq:lyapSmoothIntro}
\end{equation}
assuming the limit exists.
The Lyapunov exponent $\lambda(x,v)$
represents rate at which the forward orbits of $x$ and $x + \delta v$ diverge, in the $\delta \to 0$ limit.
On an attractor we may have $\lambda(x,v) > 0$ (almost everywhere)
in which case the dynamics is locally expanding---this is the essence of chaos\removableFootnote{
Essentially I'm using $\lambda > 0$ as my definition of chaos.
In \cite{AbBi04} it is shown that for maps on $[0,1]$ that have a Markov partition,
a positive Lyapunov exponent implies sensitive dependence on initial conditions.
}.

For a piecewise-smooth map $f$,
$\rD f$ is undefined at points on switching manifolds.
In this case the limiting rate of divergence is instead given by
\begin{equation}
\lambda(x,v) = \lim_{n \to \infty} \frac{1}{n} \ln \left( \left\| C_n(x,v) v \right\| \right),
\label{eq:lyapPWSIntro}
\end{equation}
where $C_n$ is a matrix defined in \S\ref{sub:lyapPWS}.
The key difference between \eqref{eq:lyapSmoothIntro} and \eqref{eq:lyapPWSIntro}
is that $C_n$ depends on $v$, whereas $D f^n(x)$ does not.
This is because if $x$ lies on a switching manifold
then the smooth component of $f$ that we must use
depends on which side the of the switching manifold the point $x + \delta v$ lies.
Lyapunov exponents have also been described for more general classes of continuous but non-differentiable maps,
see \cite{BaSi05,BeSi12,Ki83}\removableFootnote{
In \cite{BaSi05,Ki83}, maximal Lyapunov-like exponents are defined by taking the supremum over $v$
inside the $n \to \infty$ limit.
In \cite{BeSi12}, Lyapunov-like exponents for a given $v$ are considered.
}.
In \S\ref{sub:bound} we derive a lower bound on the maximal value of \eqref{eq:lyapPWSIntro}
based on the determinants of the two one-sided Jacobian matrices associated with a BCB.
For random matrix products,
lower bounds have recently been obtained using analytic functions \cite{Po10,PrJu13}\removableFootnote{
See also \cite{StTh19} for two matrices.
}.
Then in \S\ref{sub:numerics} we demonstrate the results with the
two-dimensional border-collision normal form.
This is the map
\begin{equation}
f_\mu(x) = \begin{cases}
A_L x + b \mu, & x_1 \le 0, \\
A_R x + b \mu, & x_1 \ge 0,
\end{cases}
\label{eq:bcnf}
\end{equation}
where
\begin{align}
A_L &= \begin{bmatrix} \tau_L & 1 \\ -\delta_L & 0 \end{bmatrix}, &
A_R &= \begin{bmatrix} \tau_R & 1 \\ -\delta_R & 0 \end{bmatrix}, &
b &= \begin{bmatrix} 1 \\ 0 \end{bmatrix},
\label{eq:ALAR}
\end{align}
and $\tau_L, \delta_L, \tau_R, \delta_R \in \mathbb{R}$ \cite{NuYo92}.
Finally \S\ref{sec:conc} provides concluding remarks.

\subsection{Notation}
\label{sub:notation}

We study maps on $\mathbb{R}^d$ ($d \ge 1$) with the Euclidean norm $\| x \| = \sqrt{x_1^2 + \cdots + x_d^2}$.
The origin or zero vector is denoted $\b0$.
Open balls are written as $B_\ee(x) = \left\{ y \in \mathbb{R}^d \,\middle|\, \| x-y \| < \ee \right\}$.
The tangent space to a point $x$ is denoted $T \mathbb{R}^d$.
Here we omit the dependence on $x$ because $T \mathbb{R}^d$ is isomorphic to $\mathbb{R}^d$ for all $x$;
indeed we simply treat tangent vectors as elements of $\mathbb{R}^d$.
The unit sphere is denoted $\mathbb{S}^{d-1} = \left\{ v \in T \mathbb{R}^d \,\middle|\, \| v \| = 1 \right\}$.

If $X$ is a subset of $Y$ we write $X \subset Y$.
We write ${\rm int}(X)$ and ${\rm cl}(X)$ for the interior and closure of $X$.
The $d$-dimensional Lebesgue measure of a set $\Omega \subset \mathbb{R}^d$ is written as ${\rm meas}(\Omega)$.
For a set $\gamma \subset \mathbb{S}^{d-1}$, we use the {\em spherical measure} which is defined by
\begin{equation}
\sphMeas(\gamma) = \frac{{\rm meas} \left( \left\{ \alpha v \,\middle|\, v \in \gamma,\, 0 \le \alpha \le 1 \right\} \right)}
{{\rm meas} \left( B_1(\b0) \right)} \;.
\label{eq:sphMeas}
\end{equation}
We use $\co$ for little-o notation \cite{De81}.
In particular a function $\phi : \mathbb{R} \to \mathbb{R}$ is said to be $\co \left( \delta \right)$
if $\lim_{\delta \to 0} \frac{\phi(\delta)}{\delta} = 0$.

\section{The stability of a fixed point at a border-collision bifurcation}
\label{sec:stability}
\setcounter{equation}{0}

We first recall some basic definitions.
A fixed point $x^*$ of a continuous map $f$ on $\mathbb{R}^d$ is {\em Lyapunov stable}
if for all $\ee > 0$ there exists $\delta > 0$ such that
$f^i(x) \in B_\ee(x^*)$ for all $x \in B_\delta(x^*)$ and all $i \ge 0$.
The point $x^*$ is {\em asymptotically stable} if it is Lyapunov stable
and there exists $\delta > 0$ such that $f^i(x) \to x^*$ as $i \to \infty$ for all $x \in B_\delta(x^*)$.

\subsection{Reduction to a piecewise-linear map}
\label{sub:pwl}

Let $f_\mu$ be a continuous piecewise-$C^1$ map on $\mathbb{R}^d$
with parameter $\mu \in \mathbb{R}$.
Suppose $f_\mu$ undergoes a BCB at a point on a single smooth switching manifold $\Sigma$
that, locally, divides phase space into two regions.
Only two pieces of $f_\mu$ influence the local dynamics
and we assume coordinates can be chosen so that $\Sigma$ is the plane $x_1 = 0$, that is
\begin{equation}
f_\mu(x) = \begin{cases}
f_{L,\mu}(x), & x_1 \le 0, \\
f_{R,\mu}(x), & x_1 \ge 0,
\end{cases}
\label{eq:fPWS}
\end{equation}
where $f_{L,\mu}$ and $f_{R,\mu}$ are $C^1$.

We further assume the BCB occurs at $x = \b0$ when $\mu = 0$
and that $A_L = \rD f_{L,0}(\b0)$ and $A_R = \rD f_{R,0}(\b0)$ are well-defined.
By assumption $f$ is continuous on $\Sigma$
thus $A_R$ can only differ from $A_L$ in its first column.
We use these matrices to form the piecewise-linear map
\begin{equation}
g(x) = \begin{cases}
A_L x, & x_1 \le 0, \\
A_R x, & x_1 \ge 0.
\end{cases}
\label{eq:g}
\end{equation}
This map approximates $f_0$ near $x = \b0$,
specifically $f_0(x) = g(x) + \co(\|x\|)$,
and we have the following result.

\begin{lemma}
Consider \eqref{eq:fPWS} with $\mu = 0$ and suppose $x = \b0$ is a fixed point.
If $\b0$ is an asymptotically stable fixed point of its piecewise-linear approximation \eqref{eq:g}
then $\b0$ is an asymptotically stable fixed point of \eqref{eq:fPWS}.
\label{le:approx}
\end{lemma}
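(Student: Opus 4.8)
The plan is to use Theorem \ref{th:stability} to recast the asymptotic stability of $\b0$ for $g$ as the existence of a trapping region, and then to transfer this to $f_0$ (which, as noted above, satisfies $f_0(x) = g(x) + \co(\|x\|)$) by exploiting that $g$ is positively homogeneous of degree one while the perturbation is of strictly lower order near $\b0$. By Theorem \ref{th:stability}, asymptotic stability of $\b0$ for $g$ furnishes a compact, convex set $\Omega$ (a polytope, say) with $\b0 \in {\rm int}(\Omega)$ for which a suitable iterate of $g$ maps $\Omega$ into ${\rm int}(\Omega)$. For brevity I describe the case in which a single iterate works, i.e. $g(\Omega) \subseteq {\rm int}(\Omega)$; the general case $g^N(\Omega) \subseteq {\rm int}(\Omega)$ reduces to it by applying the same argument to $f_0^N$, which is again of the form $g^N + \co(\|x\|)$ on a bounded set since $g$ is Lipschitz, together with an elementary control of the finitely many intermediate iterates $f_0, \dots, f_0^{N-1}$ near $\b0$. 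Positive homogeneity gives $g(\lambda\Omega) = \lambda\, g(\Omega)$, so every scaled copy $\lambda\Omega$, $\lambda > 0$, is likewise mapped into its own interior.

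Next I would quantify the contraction using the Minkowski gauge $p_\Omega(x) = \inf\{ t > 0 \mid x \in t\Omega \}$, which for $\Omega$ convex with $\b0 \in {\rm int}(\Omega)$ is continuous, subadditive, positively homogeneous of degree one, comparable to $\|\cdot\|$ (say $c_1 \|x\| \le p_\Omega(x) \le c_2 \|x\|$), and satisfies $\Omega = \{ p_\Omega \le 1 \}$. Compactness of $g(\Omega)$ together with $g(\Omega) \subseteq \{ p_\Omega < 1 \}$ gives $s := \max_{x \in \Omega} p_\Omega(g(x)) < 1$, hence $p_\Omega(g(x)) \le s\, p_\Omega(x)$ for all $x$ by homogeneity. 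Writing $f_0(x) = g(x) + \phi(x)$ with $\|\phi(x)\| \le \eta(\|x\|)\|x\|$ and $\eta(r) \to 0$ as $r \to 0^+$, subadditivity of $p_\Omega$ and comparability with $\|\cdot\|$ yield $p_\Omega(f_0(x)) \le (s + c\, \eta(\|x\|))\, p_\Omega(x)$ with $c = c_2/c_1$. Thus on a sufficiently small scaled copy $\Omega_0 = \lambda_0\Omega$ — on which $\|x\|$, and therefore $\eta(\|x\|)$, is small — we get $p_\Omega(f_0(x)) \le \hat s\, p_\Omega(x)$ with a fixed $\hat s \in (0,1)$, so $f_0(\Omega_0) \subseteq \{ p_\Omega \le \hat s \lambda_0 \} \subseteq {\rm int}(\Omega_0)$ and, iterating, $f_0^n(\Omega_0) \subseteq \{ p_\Omega \le \hat s^{\,n} \lambda_0 \}$. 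I expect this estimate — verifying that the $\co(\|x\|)$ term is dominated by the linear contraction margin on every small enough scaled copy of $\Omega$ — to be the only real point of the proof, and it is not deep.

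Finally, asymptotic stability of $\b0$ for $f_0$ is read off from the nested sublevel sets $\{ p_\Omega \le \hat s^{\,n} \lambda_0 \}$, which shrink to $\{ \b0 \}$. Since $\b0 \in {\rm int}(\Omega_0)$, pick $\delta > 0$ with $B_\delta(\b0) \subseteq \Omega_0$; for $x \in B_\delta(\b0)$ the forward orbit stays in $\Omega_0$ and, given $\ee > 0$, lies in $B_\ee(\b0)$ once $\hat s^{\,n} \lambda_0$ is small enough — so shrinking $\lambda_0$ at the outset so that $\Omega_0 \subseteq B_\ee(\b0)$ gives Lyapunov stability, while $\hat s < 1$ forces $f_0^n(x) \to \b0$, i.e. attraction. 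Hence $\b0$ is an asymptotically stable fixed point of \eqref{eq:fPWS} with $\mu = 0$, establishing Lemma \ref{le:approx}.
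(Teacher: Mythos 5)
Your argument is correct, but it takes a genuinely different route from the paper, which in fact gives no proof at all: there Lemma \ref{le:approx} is dispatched as an immediate consequence of Theorem 1 of \cite{Si16d} (with \cite{DeSc73} noted as achieving a similar result). Your proof makes the lemma self-contained modulo Theorem \ref{th:stability}: you use the implication (\ref{it:1}) $\Rightarrow$ (\ref{it:2}) of that theorem with a convex $\Omega$, so that some iterate satisfies $g^N(\Omega) \subset {\rm int}(\Omega)$, and then the Minkowski gauge $p_\Omega$ acts as an explicit Lyapunov function: linear homogeneity upgrades $g^N(\Omega) \subset {\rm int}(\Omega)$ to the global estimate $p_\Omega \left( g^N(x) \right) \le s\, p_\Omega(x)$ with $s < 1$, and the error in $f_0^N = g^N + \co(\|x\|)$ is absorbed into the contraction margin on a sufficiently small scaled copy $\lambda_0 \Omega$. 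The two steps you leave as sketches --- that $f_0^N - g^N = \co(\|x\|)$ because $g$ is Lipschitz and the one-step error is $\co(\|x\|)$, and that asymptotic stability of $\b0$ under $f_0^N$ together with continuity of the finitely many maps $f_0, \ldots, f_0^{N-1}$ at the fixed point yields asymptotic stability under $f_0$ --- are standard and genuinely elementary, so I see no gap; there is also no circularity, since the paper's proof of Theorem \ref{th:stability} does not use Lemma \ref{le:approx}. As for what each approach buys: the paper's citation is shortest and leaves the perturbation analysis to \cite{Si16d}, whereas your argument exhibits a concrete contraction rate $\hat{s} < 1$ in the gauge (hence exponential convergence to $\b0$ on $\lambda_0 \Omega$) and makes transparent exactly where the $\co(\|x\|)$ approximation enters; note, though, that it still relies on \cite{Si16d} indirectly, because the paper proves (\ref{it:1}) $\Rightarrow$ (\ref{it:2}) of Theorem \ref{th:stability} via Lemma \ref{le:stability}, which is quoted from that reference.
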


Lemma \ref{le:approx} justifies our subsequent study of $g$
(although its converse is not true in general).
Lemma \ref{le:approx} is an immediate consequence of Theorem 1 of \cite{Si16d}
(also a similar result is achieved in \cite{DeSc73})
so we do not provide a proof.

\subsection{Stability for piecewise-linear maps}
\label{sub:stabilityPWL}

The problem of the stability of $\b0$ for the piecewise-linear map \eqref{eq:g}
is a special case of a common and well-studied problem in control theory.
Many control systems are well modelled by piecewise-linear maps of the form $x(i+1) \mapsto A_{\sigma(i)} x(i)$,
where $A_1,\ldots,A_N$ are real-valued $d \times d$ matrices
and $\sigma(i) \in \{ 1,\ldots,N \}$ indicates which matrix is applied at the $i^{\rm th}$ time step.
The function $\sigma$ may be state-dependent (as in our case),
a pre-determined signal (discretised wave-form),
or determined by a particular control strategy.
One would like to know if $\b0$ is asymptotically stable for all possible $\sigma$\removableFootnote{
If each $A_j$ has eigenvalues with modulus less than $1$,
then $\b0$ may be unstable (dangerous BCBs provide an example)
but only if the dwell time is too small (discussed in the introduction of \cite{GeCo06}).
},
or, more weakly, if there exists $\sigma$ for which $\b0$ is asymptotically stable \cite{FoVa12,GeCo06,LiAn09}\removableFootnote{
Also there seems to be a lot of recent work done on accommodating uncertainties and time-delay.
}.

Broadly speaking the mathematical tool of choice
to address these problems is the {\em Lyapunov function}\removableFootnote{
For Lyapunov functions for PWL ODEs (that are possibly discontinuous) see \cite{JoRa98,Fe02}.
}.
We instead follow \cite{AtLa14}
and consider the images of a compact region $\Omega$ for which $\b0 \in {\rm int}(\Omega)$.
As discussed below this approach is well-suited to numerical implementation, allows any such $\Omega$,
and, unlike commonly used classes of Lyapunov functions,
provides an exact characterisation of the asymptotic stability of $\b0$.

\subsection{Stability for linearly homogeneous maps}
\label{sub:lh}

The map \eqref{eq:g} is an example of a {\em linearly homogeneous} map:
a continuous map $g$ on $\mathbb{R}^d$ for which $g(\alpha x) = \alpha g(x)$
for all $x \in \mathbb{R}^d$ and all $\alpha \ge 0$.
Notice that every linearly homogeneous map has $\b0$ as a fixed point.

\begin{theorem}
Let $g$ be a continuous, linearly homogeneous map on $\mathbb{R}^d$.
Let $\Omega \subset \mathbb{R}^d$ be compact
with $\b0 \in {\rm int}(\Omega)$.
The following are equivalent\removableFootnote{
I conjecture that the following statement is also equivalent
to the asymptotic stability of $\b0$:
there exists compact $\Omega \subset \mathbb{R}^d$ with $\b0 \in {\rm int}(\Omega)$
such that $f(\Omega) \subset {\rm int}(\Omega)$.
It is easy to show this condition implies asymptotic stability
following my demonstration of \eqref{it:3} $\Rightarrow$ \eqref{it:1} here.
The converse seems far more difficult,
but it's probably easy to show that the existence of such an $\Omega$
is equivalent to the existence of a strong Lyapunov function.
In this case it remains to show that asymptotic stability
implies the existence of a strong Lyapunov function
(so we need a `converse Lyapunov theorem')
but I don't know how to do this
(for ODEs such a Lyapunov function can be constructed from the flow,
see Theorem 4.8 of \cite{Me07}).
}.
\begin{enumerate}[label=\roman*),ref=\roman*,itemsep=0mm]
\item
$\b0$ is an asymptotically stable fixed point of $g$,
\label{it:1}
\item
there exists $m \ge 1$ such that $g^m(\Omega) \subset {\rm int}(\Omega)$, and
\label{it:2}
\item
there exists $m \ge 1$ such that
$g^m(\Omega) \subset {\rm int} \left( \bigcup_{i=0}^{m-1} g^i(\Omega) \right)$.
\label{it:3}
\end{enumerate}
\label{th:stability}
\end{theorem}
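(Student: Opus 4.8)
The plan is to prove the three statements equivalent via the cycle $(\ref{it:1}) \Rightarrow (\ref{it:2}) \Rightarrow (\ref{it:3}) \Rightarrow (\ref{it:1})$. The middle implication needs no work: $\Omega = g^0(\Omega)$ is one of the sets in $\bigcup_{i=0}^{m-1} g^i(\Omega)$, so $\mathrm{int}(\Omega) \subseteq \mathrm{int}\bigl(\bigcup_{i=0}^{m-1} g^i(\Omega)\bigr)$, and hence (ii) gives (iii) with the same $m$.

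For $(\ref{it:1}) \Rightarrow (\ref{it:2})$, I would first upgrade asymptotic stability of $\b0$ to uniform decay of the iterates on the unit sphere. Pointwise convergence is automatic from linear homogeneity: if $g^i(x) \to \b0$ whenever $\|x\| < \delta$, then for $v \in \mathbb{S}^{d-1}$ we have $g^i(v) = \tfrac{2}{\delta}\, g^i\bigl(\tfrac{\delta}{2} v\bigr) \to \b0$. To make this uniform I would use Lyapunov stability to fix, for a given target $\varepsilon > 0$, a radius $\delta_\varepsilon$ with $\|y\| < \delta_\varepsilon \Rightarrow \|g^j(y)\| < \varepsilon$ for all $j \ge 0$, cover the compact sphere by finitely many neighbourhoods on each of which some fixed iterate of $g$ lands inside $B_{\delta_\varepsilon}(\b0)$, and take $m$ to be the largest of the corresponding exponents. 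Choosing $r, R > 0$ with $B_r(\b0) \subseteq \Omega \subseteq B_R(\b0)$ and running this with $\varepsilon = \tfrac{r}{2R}$ gives $\|g^m(v)\| < \tfrac{r}{2R}$ for all $v \in \mathbb{S}^{d-1}$, hence $\|g^m(x)\| < \tfrac{r}{2}$ for all $x \in \Omega$ by homogeneity, so $g^m(\Omega) \subseteq B_r(\b0) \subseteq \mathrm{int}(\Omega)$, which is (ii).

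The substance of the proof is $(\ref{it:3}) \Rightarrow (\ref{it:1})$. Write $U = \bigcup_{i=0}^{m-1} g^i(\Omega)$, a compact set with $\b0 \in \mathrm{int}(U)$, and assume $g^m(\Omega) \subseteq \mathrm{int}(U)$. The first observation is that $g(U) \subseteq U$: indeed $g(U) = \bigcup_{i=1}^{m} g^i(\Omega)$, the terms $g^1(\Omega), \dots, g^{m-1}(\Omega)$ lie in $U$ by definition, and $g^m(\Omega) \subseteq \mathrm{int}(U) \subseteq U$ by hypothesis; consequently $g^i(U) \subseteq U$ for every $i \ge 0$. The key step is to gain a contraction factor: since $g^m(\Omega)$ is compact and bounded and lies in the open set $\mathrm{int}(U)$, it has positive distance from the closed set $\mathbb{R}^d \setminus \mathrm{int}(U)$, and a short estimate produces $\lambda > 1$ with $\lambda\, g^m(\Omega) \subseteq \mathrm{int}(U)$. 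Setting $\rho = 1/\lambda \in (0,1)$ and using that each $g^i$ commutes with multiplication by non-negative scalars,
\begin{equation*}
g^m(U) \;=\; \bigcup_{i=0}^{m-1} g^i\bigl(g^m(\Omega)\bigr) \;\subseteq\; \bigcup_{i=0}^{m-1} g^i(\rho U) \;=\; \rho \bigcup_{i=0}^{m-1} g^i(U) \;\subseteq\; \rho\, U.
\end{equation*}
Iterating gives $g^{km}(U) \subseteq \rho^k U$ for all $k \ge 1$, so fixing $R'$ with $U \subseteq B_{R'}(\b0)$ we get $\|g^{km}(x)\| \le \rho^k R' \to 0$ for every $x \in U$; interpolating over the at most $m-1$ intermediate iterates with the bound $\|g(y)\| \le M \|y\|$, where $M = \max_{\|v\| = 1} \|g(v)\| < \infty$, shows $g^i(x) \to \b0$ for all $x \in U$, and in particular on a neighbourhood of $\b0$. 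Lyapunov stability comes from the same ingredients: $g^i(\Omega) \subseteq g^i(U) \subseteq U \subseteq B_{R'}(\b0)$ for all $i$, and homogeneity converts this uniform bound on $\Omega$ into the required $\varepsilon$-$\delta$ estimate near $\b0$. Together these give asymptotic stability of $\b0$.

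I expect the two delicate points to be the compactness argument that upgrades pointwise to uniform decay in $(\ref{it:1}) \Rightarrow (\ref{it:2})$, and the verification that $\lambda > 1$ with $\lambda\, g^m(\Omega) \subseteq \mathrm{int}(U)$ exists, which hinges on $g^m(\Omega)$ lying \emph{strictly} inside $\mathrm{int}(U)$ (one can take, e.g., $\lambda = 1 + \tfrac{\varepsilon_0}{2R}$, with $\varepsilon_0$ the distance above and $R$ a bound for $\|\cdot\|$ on $U$). Everything else is bookkeeping with images, finite unions and positive scalings, which is exactly where linear homogeneity of $g$ does the work.
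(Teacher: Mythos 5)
Your proposal is correct, and its skeleton is the same as the paper's: the cycle (i) $\Rightarrow$ (ii) $\Rightarrow$ (iii) $\Rightarrow$ (i), the set $U = \bigcup_{i=0}^{m-1} g^i(\Omega)$ (the paper's $\Xi$), a scalar contraction factor extracted from the strict inclusion $g^m(\Omega) \subset {\rm int}(U)$, and linear homogeneity plus forward invariance propagating this to $g^{km}(U) \subset \rho^k U$. You deviate in two places, both of which make the argument more self-contained. First, the paper obtains its factor $\alpha < 1$ with $g^m(\Omega) \subset \alpha \Xi$ by a Bolzano--Weierstrass contradiction argument, whereas you get $\rho = 1/\lambda < 1$ directly from the positive distance between the compact set $g^m(\Omega)$ and the closed complement of ${\rm int}(U)$; the two statements are equivalent and your derivation is arguably the cleaner one. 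Second, the paper delegates the remaining analysis to Lemma \ref{le:stability}, quoted from \cite{Si16d}: its uniform-convergence part supplies the $m$ in (i) $\Rightarrow$ (ii), and its stability part converts pointwise convergence on a ball into asymptotic stability at the end of (iii) $\Rightarrow$ (i). You instead prove both facts directly---uniform decay on the unit sphere via a finite cover of the sphere combined with Lyapunov stability of $\b0$, and, in the other direction, Lyapunov stability from the uniform bound $g^i(U) \subset U \subset B_{R'}(\b0)$ rescaled by homogeneity, with the constant $M = \max_{\|v\|=1} \|g(v)\|$ handling the intermediate iterates $g^{km+j}$. What your route buys is independence from the external lemma; what the paper's route buys is brevity, since the uniformity and stability bookkeeping is done once in the cited result and reused in both implications.
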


Theorem \ref{th:stability}, proved below, generalises a result of \cite{Si16d}
and is motivated by \cite{AtLa14} which focuses on convex subsets of $\mathbb{R}^d$.
While condition \eqref{it:2} provides a simpler characterisation of
the asymptotic stability of $\b0$ than condition \eqref{it:3},
the latter condition is more effective numerically
because the minimum value of $m$ for which condition \eqref{it:3}
is satisfied is often significantly smaller than
the minimum value of $m$ for which condition \eqref{it:2} is satisfied.

To prove Theorem \ref{th:stability} we use the following result
that was proved in \cite{Si16d}.

\begin{lemma}
Let $g$ be a continuous, linearly homogeneous map on $\mathbb{R}^d$.
Suppose there exists $\delta > 0$ such that $g^n(x) \to \b0$ as $n \to \infty$ for all $x \in B_\delta(\b0)$.
Then
\begin{enumerate}[label=\roman*),ref=\roman*,itemsep=0mm]
\item
$\b0$ is an asymptotically stable fixed point of $g$, and
\label{it:asyStable}
\item
the convergence $g^n(x) \to \b0$ is uniform on $B_\delta(\b0)$.
\label{it:uniformConvergence}
\end{enumerate}
\label{le:stability}
\end{lemma}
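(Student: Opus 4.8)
The plan is to reduce both conclusions to one uniform geometric-decay estimate for the iterates of $g$ on the unit sphere $\mathbb{S}^{d-1}$, exploiting linear homogeneity to move freely between $\mathbb{S}^{d-1}$, small balls around $\b0$, and all of $\mathbb{R}^d$. First note that $g(\b0) = \b0$ and, by continuity, $K := \max_{v \in \mathbb{S}^{d-1}} \| g(v) \|$ is finite; homogeneity then gives $\| g(x) \| \le K \| x \|$, hence $\| g^n(x) \| \le K^n \| x \|$ for all $x$ and $n$. If $K < 1$ the result is immediate, so assume $K \ge 1$. Homogeneity also upgrades the hypothesis to $g^n(v) \to \b0$ for every $v \in \mathbb{S}^{d-1}$ (apply it to $\tfrac{\delta}{2} v \in B_\delta(\b0)$), and an orbit that ever reaches $\b0$ stays there, so it suffices to treat $v \in \mathbb{S}^{d-1}$ whose orbit never reaches $\b0$.

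The central device is the first-entrance time $h : \mathbb{S}^{d-1} \to \mathbb{N}$ given by $h(v) = \min \{ n \ge 1 : \| g^n(v) \| < \tfrac12 \}$, which is well defined precisely because of the pointwise convergence. I claim $h$ is bounded on $\mathbb{S}^{d-1}$: for fixed $v$, the strict inequality $\| g^{h(v)}(v) \| < \tfrac12$ and continuity of $g^{h(v)}$ provide a relatively open neighbourhood of $v$ on which $h \le h(v)$, and compactness of $\mathbb{S}^{d-1}$ then yields a common $H \ge 1$ with $h(v) \le H$ for all $v$. I expect this bound on the entrance time to be the real obstacle: it is the one step where the hypothesis genuinely enters, and it amounts to an upper-semicontinuity property of an entrance time into an open set, activated by compactness of the sphere. (One could instead attempt a Baire-category argument to bound orbits from a residual set of directions, but promoting that to \emph{all} directions looks delicate, so I would avoid that route.)

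With $H$ fixed, a renewal-type recursion finishes the proof. For $v$ with nonvanishing orbit set $n_1 = h(v)$ and $v_1 = g^{n_1}(v) / \| g^{n_1}(v) \|$, then $n_2 = h(v_1)$ and $v_2 = g^{n_2}(v_1)/\| g^{n_2}(v_1) \|$, and so on; each $n_i \in \{1, \dots, H\}$, and homogeneity gives $\| g^{n_1 + \cdots + n_j}(v) \| \le 2^{-j}$ for every $j \ge 0$. For an arbitrary $n$, let $j$ be maximal with $n_1 + \cdots + n_j \le n$; then $n = (n_1 + \cdots + n_j) + r$ with $0 \le r < H$, so $\| g^n(v) \| \le K^r \| g^{n_1 + \cdots + n_j}(v) \| \le K^H 2^{-j}$, while $n < (j+1) H$ forces $j > \tfrac{n}{H} - 1$. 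Hence $\| g^n(v) \| \le K^H$ for all $n$ and all $v \in \mathbb{S}^{d-1}$, and moreover $\| g^n(v) \| \le 2 K^H \big( 2^{-1/H} \big)^{n}$.

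These two estimates give the lemma. Rescaling the first by homogeneity gives $\| g^n(x) \| \le K^H \| x \|$ for all $x$ and $n$, so $\delta' = \ee / K^H$ witnesses Lyapunov stability of $\b0$; combined with the hypothesised convergence on $B_\delta(\b0)$ this is exactly asymptotic stability, proving \eqref{it:asyStable}. Rescaling the second gives $\| g^n(x) \| \le 2 K^H \big( 2^{-1/H} \big)^{n} \| x \|$, so $\sup_{x \in B_\delta(\b0)} \| g^n(x) \| \to 0$ as $n \to \infty$, which is \eqref{it:uniformConvergence}. Beyond the entrance-time bound, the argument is routine homogeneity bookkeeping and elementary estimates.
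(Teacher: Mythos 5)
Your argument is correct, and it takes a different route from the one the paper relies on: the paper does not prove Lemma~\ref{le:stability} itself but quotes it from \cite{Si16d}, and the only hint it gives about that proof's mechanism (see the proof of Theorem~\ref{th:attractingSet}, where uniformity of convergence on a compact set is attributed to an Arzel\`{a}--Ascoli argument) points to a soft equicontinuity/compactness route that yields exactly the qualitative statement required. You instead bound the first-entrance time $h$ into $\left\{ \|x\| < \tfrac12 \right\}$ uniformly over $\mathbb{S}^{d-1}$ --- the local bound comes from the strict inequality together with continuity of $g^{h(v)}$, and compactness of the sphere gives a global $H$ --- and then the renewal recursion plus linear homogeneity of $g^n$ delivers the quantitative estimate $\|g^n(v)\| \le 2 K^H \big( 2^{-1/H} \big)^n$. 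That buys more than the lemma asserts: global uniform \emph{exponential} decay, from which Lyapunov stability (via the uniform bound $\|g^n(x)\| \le K^H \|x\|$) and uniform convergence on $B_\delta(\b0)$ both follow at once; the cost is explicit bookkeeping, which you carry out correctly (the $K \ge 1$ reduction, $n_i \le H$, $j > \tfrac{n}{H} - 1$). The one loose end is your dismissal of directions whose orbit reaches $\b0$: the final estimate must also hold for such $v$ at times before the orbit vanishes, so strictly you should run the renewal recursion until a renewal point equals $\b0$ (after which the orbit is identically $\b0$ and the bound is trivial) and observe that the same bookkeeping covers the earlier times; this is a one-line patch and does not affect correctness.
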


\begin{figure}[b!]
\begin{center}
\setlength{\unitlength}{1cm}
\begin{picture}(6,6)
\put(0,0){\includegraphics[height=6cm]{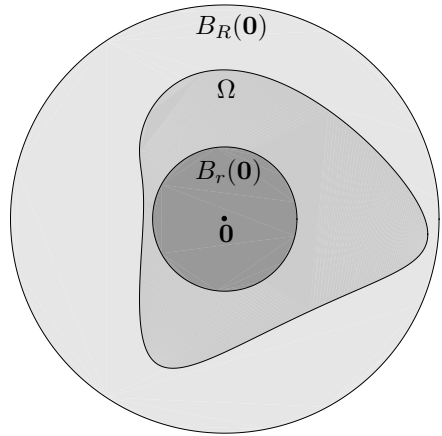}}
\put(2.92,2.69){\footnotesize $\b0$}
\put(2.6,3.53){\footnotesize $B_r(\b0)$}
\put(2.89,4.61){\footnotesize $\Omega$}
\put(2.6,5.46){\footnotesize $B_R(\b0)$}
\end{picture}
\caption{
A sketch of sets introduced in the proof of Theorem \ref{th:stability}.
\label{fig:schemStab}
} 
\end{center}
\end{figure}

\begin{proof}[Proof of Theorem \ref{th:stability}]
Since $\b0 \in {\rm int}(\Omega)$ there exists $r > 0$ such that $B_r(\b0) \subset {\rm int}(\Omega)$.
Since $\Omega$ is bounded there exists $R > r$ such that $\Omega \subset B_R(\b0)$,
see Fig.~\ref{fig:schemStab}.

We first show \eqref{it:1} $\Rightarrow$ \eqref{it:2}.
If $\b0$ is asymptotically stable
there exists $\delta > 0$ such that
$g^i(x) \to \b0$ as $i \to \infty$ for all $x \in B_\delta(\b0)$.
Since $g$ is linearly homogeneous, this is also true for all $x \in B_R(\b0)$.
By Lemma \ref{le:stability}\eqref{it:uniformConvergence}
there exists $m \ge 1$ such that $g^m \left( B_R(\b0) \right) \subset B_r(\b0)$.
Then \eqref{it:2} is satisfied because $B_r(\b0) \subset {\rm int}(\Omega)$.

Observe \eqref{it:2} $\Rightarrow$ \eqref{it:3} is trivial because
${\rm int}(\Omega) \subset {\rm int} \left( \bigcup_{i=0}^{m-1} g^i(\Omega) \right)$.
Thus it remains to show \eqref{it:3} $\Rightarrow$ \eqref{it:1}.
Let $\Xi = \bigcup_{i=0}^{m-1} g^i(\Omega)$.
Statement \eqref{it:3} implies there exists $0 < \alpha < 1$ such that $g^m(\Omega) \subset \alpha \Xi$
(where $\alpha \Xi = \left\{ \alpha x \,\middle|\, x \in \Xi \right\}$)\removableFootnote{
In \cite{Si16d} using the approach of \cite{Ga92},
the existence of $\alpha$ is obvious because we
just deal with the line segment from $(0,1)$ to $(1,0)$.
In \cite{AtLa14} such an $\alpha$ is provided by assumption.
}.
This is because if there does not exist such an $\alpha$,
then for each $\alpha_j = 1 - \frac{1}{j}$ there exists $x_j \in g^m(\Omega)$ with $x_j \notin \alpha_j \Xi$.
The sequence $\{ x_j \}$ is bounded so has a subsequence converging to some $x^*$
(by the Bolzano-Weierstrass Theorem)
and $x^* \in g^m(\Omega)$ because $g^m(\Omega)$ is closed and $x^* \notin {\rm int}(\Xi)$
which contradicts \eqref{it:3}.

We now show $g^m(\Xi) \subset \alpha \Xi$.
Choose any $x \in \Xi$.
Then $x \in g^i(\Omega)$ for some $i \in \{ 0,\ldots,m-1 \}$ and $g^{m-i}(x) \in g^m(\Omega)$.
Let $y = \frac{x}{\alpha}$.
Since $g$ is linearly homogeneous,
$g^{m-i}(y) = \frac{1}{\alpha} g^{m-i}(x) \in \frac{1}{\alpha} g^m(\Omega) \subset \Xi$.
But $\Xi$ is forward invariant by \eqref{it:3}, thus $g^m(y) \in \Xi$.
Then $g^m(x) = \alpha g^m(y) \in \alpha \Xi$ as required.

Then $g^{k m}(\Xi) \subset \alpha^k \Xi$ for all $k \ge 1$
(again using the linear homogeneity of $g$).
Thus $g^n(x) \to \b0$ as $n \to \infty$ for all $x \in \Xi$.
In particular, $g^n(x) \to \b0$ for all $x \in B_r(\b0)$,
thus $\b0$ is asymptotically stable by Lemma \ref{le:stability}\eqref{it:asyStable}.
\end{proof}

\subsection{Numerical implementation}
\label{sub:OmegaNumerically}

Here we outline a numerical procedure, based on Theorem \ref{th:stability},
that can verify the stability of a fixed point at a BCB.
For a given BCB this could provide a computed-assisted proof of stability.

We first form the piecewise-linear approximation \eqref{eq:g}.
Under such a map the image of a {\em polytope}
(a region bounded by flat surfaces, i.e.~a generalisation of polygons to more than two dimensions) is another polytope.
If a polytope has finitely many vertices (so can be encoded with finitely many points)
its image will also have finitely many vertices.
Hence we choose $\Omega$ to be a reasonably simple polytope.
We then iteratively compute $g^m(\Omega)$ and $\bigcup_{i=0}^{m-1} g^i(\Omega)$
(eliminating redundant vertices at each step)
and if we find $m$ for which condition \eqref{it:3} of Theorem \ref{th:stability} is satisfied
we conclude that $\b0$ is an asymptotically stable fixed point of \eqref{eq:g}.
Moreover, $\b0$ will be an asymptotically stable fixed point of
\eqref{eq:fPWS} at the BCB (by Lemma \ref{le:approx}).

If we cannot find $m$ satisfying condition \eqref{it:3} by iterating up to
some maximum permitted value $m_{\rm max}$,
then we cannot make a conclusion about the stability of $\b0$.
However if the value of $m_{\rm max}$ is reasonably large
this may provide useful evidence that $\b0$ is in fact unstable.

The encoding of $\Omega$ and its images can further be aided by the following observation.
A set $\Omega \subset \mathbb{R}^d$ is
{\em radially convex} (or {\em star-shaped} with respect to the origin)
if for every $x \in \Omega$ and $0 < \alpha < 1$ we have $\alpha x \in \Omega$.
It is a simple exercise to show that
the property of radial convexity is preserved under linearly homogeneous maps\removableFootnote{
Proof:
Choose any $x \in f(\Omega)$ and any $0 < \alpha < 1$.
Let $y \in \Omega$ be such that $x = f(y)$.
Then $\alpha y \in \Omega$ because $\Omega$ is radially convex.
Also $f(\alpha y) = \alpha f(y) = \alpha x$, because $f$ is linearly homogeneous,
thus $\alpha x \in f(\Omega)$ as required.
}.

\begin{figure}[b!]
\begin{center}
\setlength{\unitlength}{1cm}
\begin{picture}(12,6)
\put(0,0){\includegraphics[height=6cm]{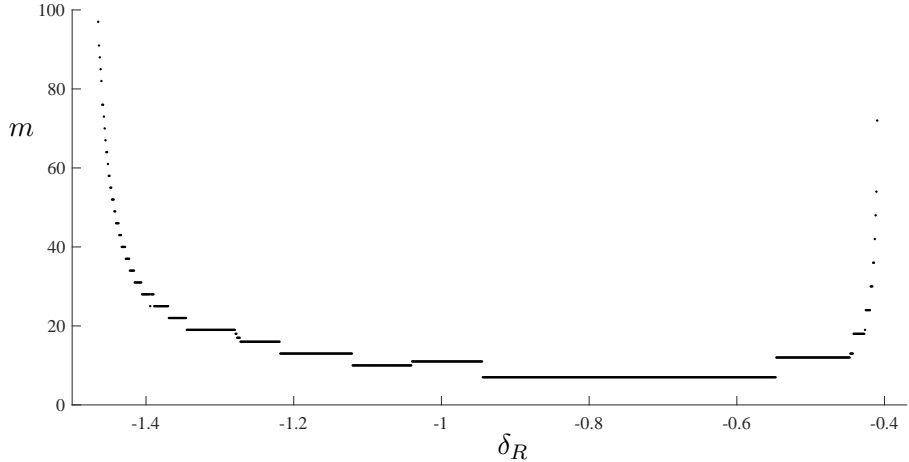}}
\put(6.5,0){\small $\delta_R$}
\put(0,4.23){\small $m$}
\end{picture}
\caption{
The smallest value of $m$ for which condition \eqref{it:3} of Theorem \ref{th:stability}
is satisfied for \eqref{eq:g} with \eqref{eq:ALAR} and \eqref{eq:params},
where $\Omega$ is the square with vertices $(\pm 1,0)$ and $(0,\pm 1)$.
\label{fig:stabIterDiamondMany}
} 
\end{center}
\end{figure}

\begin{figure}[b!]
\begin{center}
\setlength{\unitlength}{1cm}
\begin{picture}(12.5,6.45)
\put(0,0){\includegraphics[height=6cm]{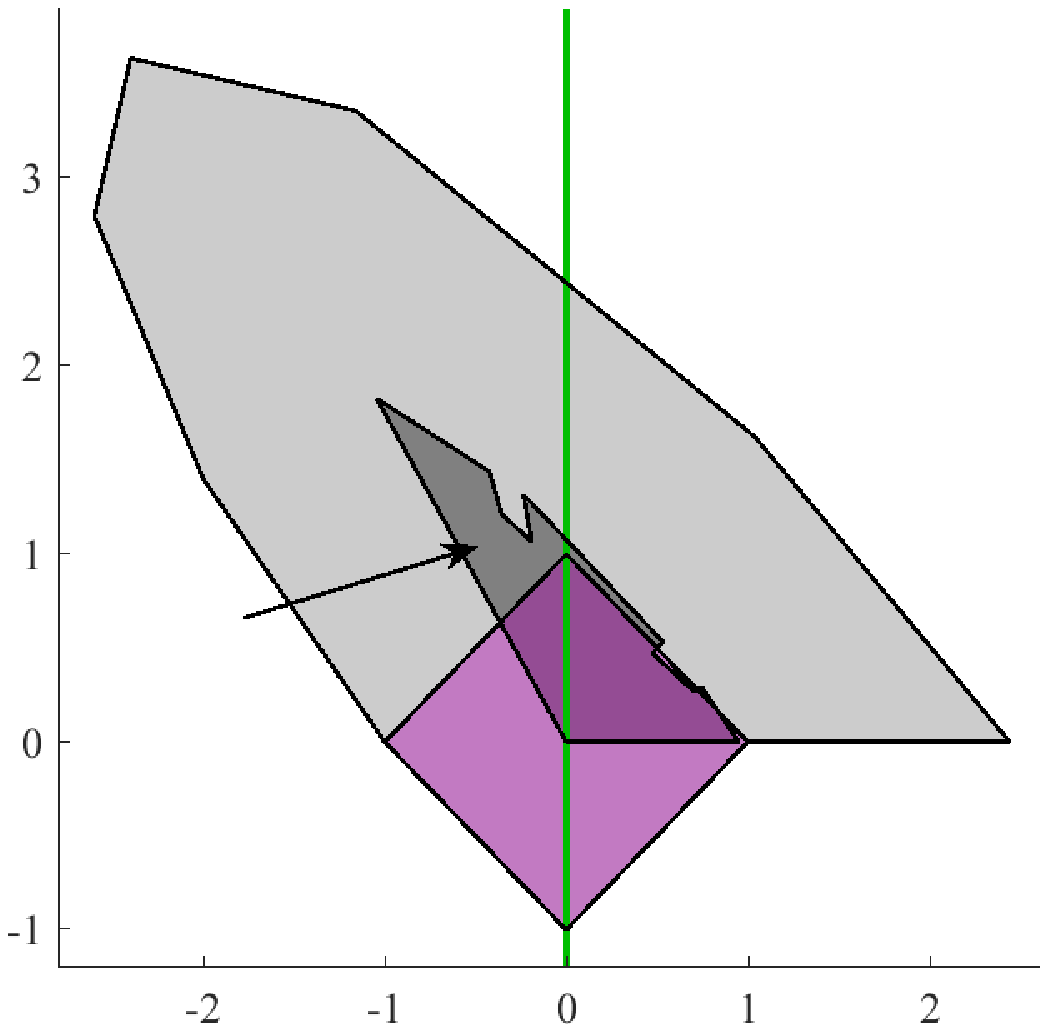}}
\put(6.5,0){\includegraphics[height=6cm]{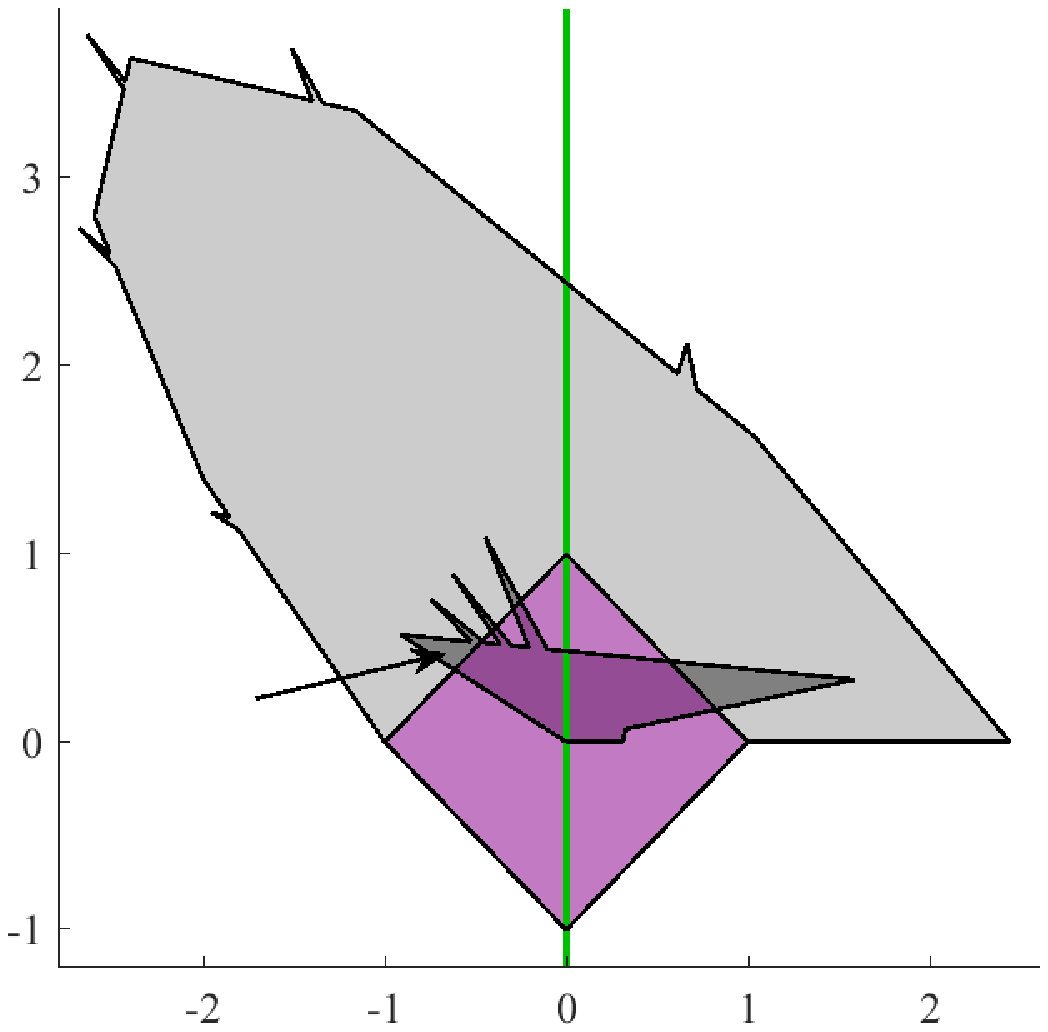}}
\put(2.15,6.15){\small ${\rm {\bf a})}~~\delta_R = -1.4$}
\put(3.2,0){\small $x_1$}
\put(0,3.42){\small $x_2$}
\put(1.38,4.52){\scriptsize $\bigcup\limits_{\scriptscriptstyle i=0}^{\scriptscriptstyle m-1} g^i(\Omega)$}
\put(.76,2.48){\scriptsize $g^m(\Omega)$}
\put(3.45,5.2){\footnotesize $\Sigma$}
\put(2.86,1.55){\footnotesize $\Omega$}
\put(8.65,6.15){\small ${\rm {\bf b})}~~\delta_R = -0.5$}
\put(9.7,0){\small $x_1$}
\put(6.5,3.42){\small $x_2$}
\put(7.88,4.52){\scriptsize $\bigcup\limits_{\scriptscriptstyle i=0}^{\scriptscriptstyle m-1} g^i(\Omega)$}
\put(7.32,2.04){\scriptsize $g^m(\Omega)$}
\put(9.95,5.2){\footnotesize $\Sigma$}
\put(9.36,1.55){\footnotesize $\Omega$}
\end{picture}
\caption{
Phase portraits illustrating condition \eqref{it:3} of Theorem \ref{th:stability}
for the map of Fig.~\ref{fig:stabIterDiamondMany} for two different values of $\delta_R$.
In the left plot $m = 28$; in the right plot $m = 12$.
\label{fig:stabIterDiamondQQ}
} 
\end{center}
\end{figure}

As an example we consider \eqref{eq:g} in two-dimensions with
$A_L$ and $A_R$ given by \eqref{eq:ALAR}.
We fix
\begin{align}
\tau_L &= 2, &
\delta_L &= 1.4, &
\tau_R &= -0.8,
\label{eq:params}
\end{align}
and consider several different values of $\delta_R$.
We let $\Omega$ be the square with vertices $(\pm 1,0)$ and $(0,\pm 1)$.
Fig.~\ref{fig:stabIterDiamondMany} shows the smallest value of $m$
for which condition \eqref{it:3} is satisfied
for $1000$ different values of $\delta_R$.
Formally we can only conclude that $\b0$ is asymptotically stable
for the discrete set of $1000$ parameter values that were simulated,
but the results strongly suggest that $\b0$ is asymptotically stable for all $-1.46 \le \delta_R \le -0.41$.

With $\delta_R = -1.4$, for example, condition \eqref{it:3} is first satisfied with $m = 28$
and the sets $g^m(\Omega)$ and $\bigcup_{i=0}^{m-1} g^i(\Omega)$
are shown in Fig.~\ref{fig:stabIterDiamondQQ}a for this value of $m$.
With instead $\delta_R = -0.5$ we obtain $m = 12$, see Fig.~\ref{fig:stabIterDiamondQQ}b.
Certainly with alternate $\Omega$
condition \eqref{it:3} can be satisfied at smaller values of $m$
but we do not attempt this here.
An understanding of the bifurcations at $\delta_R \approx -1.46$ and $\delta_R \approx -0.41$
at which stability is lost is beyond the scope of this paper.

\subsection{Persistence of an local attractor}
\label{sub:persistence}

Here we show that if the fixed point at a BCB is asymptotically stable
then there exists a local attractor for nearby values of the parameters.

\begin{theorem}
Let $f_\mu$ be a continuous map on $\mathbb{R}^d$ with parameter $\mu \in \mathbb{R}$.
Suppose $\b0$ is an asymptotically stable fixed point of $f_0$ and let
$\Omega \subset \mathbb{R}^d$ be a compact set in its basin of attraction.
Then for all $\ee > 0$ there exists $\delta > 0$ and $N \in \mathbb{Z}$ such that
\begin{equation}
f_\mu^n(x) \in B_\ee(\b0), \quad {\rm for~all~} x \in \Omega,\, n \ge N, {\rm ~and~}
\mu \in (-\delta,\delta).
\label{eq:attractingSet}
\end{equation}
\label{th:attractingSet}
\end{theorem}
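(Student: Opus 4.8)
The plan is to build, inside $B_\ee(\b0)$, a compact forward-invariant neighbourhood $V$ of $\b0$ that some iterate of $f_0$ maps into its own interior, and then to show that this trapping structure survives small changes of $\mu$: a point of $\Omega$ is carried into $V$ in a bounded number of steps and can thereafter never leave $B_\ee(\b0)$. This is a perturbation-of-a-trapping-region argument; the only genuinely non-routine ingredient is producing $V$ from the mere hypothesis of asymptotic stability. Throughout I use that $f_\mu(x)$ is jointly continuous in $(x,\mu)$ (as throughout this paper, in particular for \eqref{eq:bcnf}); together with an easy induction this gives, for every compact $S \subset \mathbb{R}^d$ and every fixed $i \ge 0$, that $\sup_{x \in S}\|f_\mu^i(x) - f_0^i(x)\| \to 0$ as $\mu \to 0$. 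The engine of the argument is the claim that if $C \subset \mathbb{R}^d$ is compact and contained in the basin of attraction of $\b0$ then $f_0^n \to \b0$ uniformly on $C$: given $\eta > 0$, Lyapunov stability provides $\rho_\eta > 0$ with $f_0^i(\overline{B_{\rho_\eta}(\b0)}) \subset B_\eta(\b0)$ for all $i \ge 0$; for each $x \in C$ some iterate of $x$ lands in $B_{\rho_\eta}(\b0)$ and, by continuity, so does the same iterate of a whole neighbourhood of $x$; a finite subcover of $C$ then yields a single time after which every orbit from $C$ stays within $\eta$ of $\b0$.

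Now fix $\ee > 0$. By asymptotic stability there is $\rho > 0$ with $\overline{B_\rho(\b0)}$ inside the basin and with $f_0^i(\overline{B_\rho(\b0)}) \subset B_{\ee/2}(\b0)$ for all $i \ge 0$ (the first property from the definition of asymptotic stability, the second from Lyapunov stability applied with target $\ee/2$). The claim applied to $C = \overline{B_\rho(\b0)}$ gives $m_0 \ge 1$ with $f_0^{m_0}(\overline{B_\rho(\b0)}) \subset B_\rho(\b0)$. Put $V = \bigcup_{i=0}^{m_0-1} f_0^i(\overline{B_\rho(\b0)})$. This $V$ is compact (a finite union of compacts), lies in $B_{\ee/2}(\b0)$ and in the basin, contains $\overline{B_\rho(\b0)}$ so that $\b0 \in {\rm int}(V)$, and is forward invariant, $f_0(V) \subset V$, since $f_0^{m_0}(\overline{B_\rho(\b0)}) \subset B_\rho(\b0) \subset V$. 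Applying the claim again to $C = V$ and to $C = \Omega$ produces $m \ge 1$ with $f_0^m(V) \subset B_\rho(\b0)$ and $N \ge 1$ with $f_0^N(\Omega) \subset B_\rho(\b0)$; recall $B_\rho(\b0) \subset {\rm int}(V)$.

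It then remains to choose $\delta > 0$ so small that for all $\mu \in (-\delta,\delta)$ one has (a) $f_\mu^m(V) \subset {\rm int}(V)$, (b) $f_\mu^N(\Omega) \subset {\rm int}(V)$, and (c) $f_\mu^r(V) \subset B_\ee(\b0)$ for each $r \in \{1,\dots,m-1\}$. Each is a relation of the form ``a compact set is contained in an open set'' that holds at $\mu = 0$ --- using $f_0^m(V) \subset B_\rho(\b0) \subset {\rm int}(V)$ and $f_0^N(\Omega) \subset B_\rho(\b0) \subset {\rm int}(V)$ for (a) and (b), and $f_0^r(V) \subset V \subset B_{\ee/2}(\b0)$ for (c) --- and there are only finitely many such conditions, so each persists for $|\mu|$ small by the uniform convergence of $f_\mu^i$ to $f_0^i$ noted above. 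Finally, take any $x \in \Omega$, any $\mu \in (-\delta,\delta)$ and any $n \ge N$, and write $n = N + km + r$ with $k \ge 0$ and $0 \le r < m$. By (b), $f_\mu^N(x) \in {\rm int}(V)$; by (a) and induction on $k$, $f_\mu^{N+km}(x) \in V$; hence $f_\mu^n(x) = f_\mu^r(f_\mu^{N+km}(x)) \in f_\mu^r(V)$, which is contained in $B_\ee(\b0)$ --- directly when $r = 0$ (as $V \subset B_{\ee/2}(\b0)$) and by (c) when $1 \le r \le m-1$. This is exactly \eqref{eq:attractingSet}.

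The step I expect to be the main obstacle is the construction of $V$: asymptotic stability of $\b0$ for $f_0$ does not by itself furnish a compact neighbourhood $K$ with $f_0^m(K) \subset {\rm int}(K)$ --- no converse Lyapunov theorem is being invoked --- so one must build it explicitly, and it must be placed inside $B_{\ee/2}(\b0)$ for the final index bookkeeping to remain inside $B_\ee(\b0)$; this is why one takes the forward orbit of a Lyapunov-stability ball rather than the ball itself. A secondary point needing care is that $f_\mu$ typically has no fixed point near $\b0$, so the orbit is held in $B_\ee(\b0)$ for all large $n$ only through the $f_\mu^m$-invariance of $V$ together with the a priori bound (c) on the finitely many intermediate iterates $f_\mu^1(V),\dots,f_\mu^{m-1}(V)$.
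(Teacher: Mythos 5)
Your proof is correct and is in substance the same argument as the paper's: uniform attraction of $f_0$ on compact subsets of the basin supplies the times $N$ and $m$ (the paper's $M$), uniform-in-$x$ closeness of $f_\mu^i$ to $f_0^i$ for the finitely many indices needed supplies $\delta$, and an induction in blocks of length $m$ keeps the perturbed orbit in $B_\varepsilon(\mathbf{0})$ for all $n \ge N$. The only difference is packaging: you assemble the explicit forward-invariant compact set $V=\bigcup_{i=0}^{m_0-1}f_0^i\bigl(\overline{B_\rho(\mathbf{0})}\bigr)$ and express persistence as stability of compact-in-open inclusions such as $f_0^m(V)\subset{\rm int}(V)$, whereas the paper runs the same renewal scheme directly on the ball $B_{\delta_1}(\mathbf{0})$ with triangle-inequality estimates, and cites the Arzel\`a--Ascoli theorem for the uniform convergence that you establish by a finite-subcover argument.
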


Theorem \ref{th:attractingSet} is the discrete-time analogue of Theorem 5.4 of \cite{DiNo08} for
ordinary differential equations
and is proved below using standard techniques in real analysis.
Specifically it shows that all points in a compact set
$\Omega \subset \mathbb{R}^d$ map into, and never escape from,
the ball $B_\ee(\b0)$ within at most $N$ iterations of $f$.
Therefore $\Omega$ is a trapping region for $f^N$, assuming $B_\ee(\b0) \subset \Omega$,
and thus $f$ has an attractor in $B_\ee(\b0)$.

\begin{proof}[Proof of Theorem \ref{th:attractingSet}]
Choose any $\ee > 0$.
Since $\b0$ is an asymptotically stable fixed point of $f_0$
there exists $\delta_1 > 0$ (with $\delta_1 \le \ee$) such that
\begin{equation}
f_0^n(x) \in B_{\frac{\ee}{2}}(\b0), \quad
{\rm for~all~} x \in B_{\delta_1}(\b0), {\rm ~and~all~} n \ge 0,
\label{eq:LyapunovStable}
\end{equation}
and $f_0^n(x) \to \b0$ as $n \to \infty$, for all $x \in {\rm cl} \left( B_{\delta_1}(\b0) \right)$.
This convergence is uniform because ${\rm cl} \left( B_{\delta_1}(\b0) \right)$ is compact
(this is a consequence of the Arzel\`{a}-Ascoli theorem;
for details refer to the proof of Lemma 3 of \cite{Si16d}).
Thus there exists $M \in \mathbb{Z}$ such that
\begin{equation}
f_0^M(x) \in B_{\frac{\delta_1}{2}}(\b0), \quad {\rm for~all~} x \in B_{\delta_1}(\b0).
\label{eq:uniformConvergenceB}
\end{equation}
Also $f_0^n(x) \to \b0$ as $n \to \infty$, for all $x \in \Omega$,
because $\Omega$ belongs to the basin of attraction of $\b0$.
This convergence is similarly uniform because $\Omega$ is compact.
Thus there exists $N \in \mathbb{Z}$ such that
\begin{equation}
f_0^N(x) \in B_{\frac{\delta_1}{2}}(\b0), \quad {\rm for~all~} x \in \Omega.
\label{eq:uniformConvergenceK}
\end{equation}
Since $f_\mu^N$ is continuous and $\Omega$ is compact, $f_\mu^N$ is uniformly continuous on $\Omega$.
Thus there exists $\delta_2 > 0$ such that
\begin{equation}
\left\| f_\mu^N(x) - f_0^N(x) \right\| < \frac{\delta_1}{2} \;, \quad
{\rm for~all~} x \in \Omega, {\rm ~and~all~} \mu \in (-\delta_2,\delta_2).
\label{eq:uniformContinuityK}
\end{equation}
Similarly $f_\mu^i$ is uniformly continuous on $B_{\delta_1}(\b0)$ for all $i \ge 1$,
thus there exists $\delta > 0$ (with $\delta \le \delta_2$) such that
\begin{equation}
\left\| f_\mu^i(x) - f_0^i(x) \right\| < \frac{\delta_1}{2} \;, \quad
{\rm for~all~} x \in B_{\delta_1}(\b0), {\rm ~all~} i \in \{ 1,\ldots,M \}, {\rm ~and~all~}
\mu \in (-\delta,\delta).
\label{eq:uniformContinuityB}
\end{equation}
Now choose any $x \in \Omega$ and any $\mu \in (-\delta,\delta)$.
By \eqref{eq:uniformConvergenceK} and \eqref{eq:uniformContinuityK}, we have
\begin{equation}
\left\| f_\mu^N(x) \right\| \le
\left\| f_\mu^N(x) - f_0^N(x) \right\| + \left\| f_0^N(x) \right\| \le
\frac{\delta_1}{2} + \frac{\delta_1}{2} = \delta_1 \,.
\nonumber
\end{equation}
This verifies \eqref{eq:attractingSet} for $n = N$ (because $\delta_1 \le \ee$).
By applying \eqref{eq:LyapunovStable} and \eqref{eq:uniformContinuityB} to the point $f_\mu^N(x)$,
for any $i \in \{ 1,\ldots,M \}$ we have
\begin{equation}
\left\| f_\mu^{N+i}(x) \right\| \le
\left\| f_\mu^{N+i}(x) - f_0^{N+i}(x) \right\| + \left\| f_0^{N+i}(x) \right\| \le
\frac{\delta_1}{2} + \frac{\ee}{2} \le \ee.
\nonumber
\end{equation}
This verifies \eqref{eq:attractingSet} for $n = N+1,\ldots,N+M$.
By applying \eqref{eq:uniformConvergenceB} and \eqref{eq:uniformContinuityB} to $f_\mu^N(x)$, we have
\begin{equation}
\left\| f_\mu^{N+M}(x) \right\| \le
\left\| f_\mu^{N+M}(x) - f_0^{N+M}(x) \right\| + \left\| f_0^{N+M}(x) \right\| \le
\frac{\delta_1}{2} + \frac{\delta_1}{2} = \delta_1 \,.
\nonumber
\end{equation}
Thus we can repeat the last two steps indefinitely to
verify \eqref{eq:attractingSet} for all $n \ge N$.
\end{proof}

\section{Lyapunov exponents}
\label{sec:lyap}
\setcounter{equation}{0}

\subsection{Lyapunov exponents for smooth maps}
\label{sub:lyapSmooth}

Let $f$ be a $C^1$ map on $\mathbb{R}^d$.
Given $x \in \mathbb{R}^d$, $v \in \rT \mathbb{R}^d$, and $0 < \delta \ll 1$,
we are interested in how the forward orbit of the perturbed point $x + \delta v$
compares to the forward orbit of $x$.
After one application of $f$ the difference between the iterates is
\begin{equation}
f(x + \delta v) - f(x) = \delta \rD f(x) v + \co(\delta).
\label{eq:Fperturbation}
\end{equation}
After $n$ applications of $f$ the difference is
\begin{equation}
f^n(x + \delta v) - f^n(x) = \delta \rD f^n(x) v + \co(\delta).
\label{eq:Fnperturbation}
\end{equation}
For large $n$ the distance between the iterates is\removableFootnote{
This is consequence of the trivial rearrangement
$\left\| f^n(x + \delta v) - f^n(x) \right\| = \delta
\,{\rm e}^{n \frac{1}{n} \ln \left(  \left\| \rD f^n(x) v \right\| + \co(1) \right)}$.
}
$\left\| f^n(x + \delta v) - f^n(x) \right\| \sim \delta {\rm e}^{\lambda(x,v) n}$, where
\begin{equation}
\lambda(x,v) = \lim_{n \to \infty} \frac{1}{n} \ln \left( \left\| \rD f^n(x) v \right\| \right),
\label{eq:lyapSmooth}
\end{equation}
if this limit exists.
The {\em Lyapunov exponent} $\lambda(x,v)$ represents the asymptotic rate of expansion of $f$ at $x \in \mathbb{R}^d$
in the direction $v \in \rT \mathbb{R}^d$.
Oseledets' theorem \cite{BaPe07,EcRu85,Vi14} gives conditions under which,
for almost all $x$ in an invariant set,
$\lambda(x,v)$ is well-defined and takes at most $d$ values independent of $x$\removableFootnote{
Specifically, the limit $Z = \lim_{n \to \infty} \left( \rD f^n(x)^{\sf T} \rD f^n(x) \right)^{\frac{1}{2 n}}$ exists.
Note: for any matrix $A$, the matrix $A^{\sf T} A$ is positive semi-definite
(as stated on the ``positive-definite matrix'' Wikipedia page),
and thus has a unique power of $\frac{1}{2 n}$ that is also positive semi-definite
(obtained by diagonalisation, what if it is not diagonalisable?).
The natural logarithms of the eigenvalues of $Z$ are the Lyapunov exponents.
The eigenspaces provide the corresponding direction vectors.
These give the directions of stable and unstable manifolds, see \cite{EcRu85}.
}\removableFootnote{
Cocycles have a `multiplicative' property,
thus Oseledets' theorem is known as a `multiplicative ergodic theorem'.
In contrast, Kingman's subadditive ergodic theorem
applies to functions that have a `subadditive' property.
Also the Furstenberg-Kesten theorem applies to
Lyapunov exponents that are built upon random matrix products \cite{Ar98,FuKe60}.
}.

Before we consider piecewise-smooth maps,
it is helpful to consider the evolution of points and tangent vectors together.
Define a map $h$ on the tangent bundle $\mathbb{R}^d \times \rT \mathbb{R}^d$ by
\begin{equation}
h(x,v) = \big( f(x), \rD f(x) v \big).
\label{eq:hSmooth}
\end{equation}
Since $\rD f^n(x) = \rD f \big( f^{n-1}(x) \big) \rD f \big( f^{n-2}(x) \big) \cdots \rD f(x)$,
the composition of $h$ with itself $n$ times is
\begin{equation}
h^n(x,v) = \big( f^n(x), \rD f^n(x) v \big).
\label{eq:hSmoothn}
\end{equation}
The first component of \eqref{eq:hSmoothn} is the $n^{\rm th}$ iterate of $x$ under $f$.
The second component provides the vector in \eqref{eq:lyapSmooth}.
The matrix $\rD f^n(x)$ is a {\em cocycle} because
$\rD f^{m+n}(x) = \rD^m(f^n(x)) \rD f^n(x)$ for all $m,n \ge 0$, \cite{KaHa95}\removableFootnote{
Also $h$ is an example of a {\em skew product},
although there doesn't seem to be much point mentioning this
and I don't have a good reference for it anyway.
}.

\subsection{Lyapunov exponents for piecewise-smooth maps}
\label{sub:lyapPWS}

Now let $f$ be a continuous, piecewise-smooth map of the form \eqref{eq:fPWS}
(here we ignore the dependency on $\mu$).
We assume $f_L$ and $f_R$ are $C^1$ throughout 
$\left\{ x \in \mathbb{R}^d \,\middle|\, x_1 \le 0 \right\}$
and $\left\{ x \in \mathbb{R}^d \,\middle|\, x_1 \ge 0 \right\}$, respectively,
and first generalise \eqref{eq:Fperturbation} to this piecewise-smooth setting.

\begin{lemma}
For any $x \in \mathbb{R}^d$ and $v \in \rT \mathbb{R}^d$,
\begin{equation}
f(x + \delta v) - f(x) = \delta C(x,v) v + \co(\delta),
\label{eq:perturbation1}
\end{equation}
where
\begin{equation}
C(x,v) = \begin{cases}
\rD f_L(x), & x_1 < 0, \!{\rm ~or~} x_1 = 0 {\rm ~and~} v_1 < 0, \\
\rD f_R(x), & x_1 > 0, \!{\rm ~or~} x_1 = 0 {\rm ~and~} v_1 \ge 0,
\end{cases}
\label{eq:C}
\end{equation}
\label{le:C}
\end{lemma}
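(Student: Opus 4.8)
The plan is a case analysis on the sign of $x_1$, and when $x_1 = 0$ on the sign of $v_1$; in every case the difference $f(x + \delta v) - f(x)$ is reduced to a first-order Taylor expansion of a single $C^1$ piece, $f_L$ or $f_R$, evaluated at $x$.

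I would first dispose of the case $x_1 \ne 0$. Since $\left\{ x_1 < 0 \right\}$ and $\left\{ x_1 > 0 \right\}$ are open and $(x + \delta v)_1 \to x_1$ as $\delta \to 0$, for all sufficiently small $\delta > 0$ the points $x$ and $x + \delta v$ lie on the same side of $\Sigma$, so $f$ coincides with the relevant smooth piece at both points, and the claim follows immediately from the differentiability of that piece at $x$. This reproduces the first and second alternatives of \eqref{eq:C} for $x_1 < 0$ and $x_1 > 0$ respectively.

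The substantive case is $x_1 = 0$. If $v_1 < 0$ then $(x + \delta v)_1 = \delta v_1 < 0$ for every $\delta > 0$, so $x + \delta v$ sits in $\left\{ x_1 \le 0 \right\}$ while $x$ lies on its boundary; I would expand $f_L$ along the segment from $x$ to $x + \delta v$, which stays in $\left\{ x_1 \le 0 \right\}$, using the one-sided $C^1$ regularity of $f_L$ at the boundary point $x$, to obtain $f(x + \delta v) - f(x) = \delta \rD f_L(x) v + \co(\delta)$. The case $v_1 > 0$ is symmetric with $f_R$ in place of $f_L$. When $v_1 = 0$ the perturbed point stays on $\Sigma$, so either piece may be used; choosing $f_R$ gives $\delta \rD f_R(x) v + \co(\delta)$, matching the convention in \eqref{eq:C} that $C(x,v) = \rD f_R(x)$ when $x_1 = 0$ and $v_1 \ge 0$.

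The one loose end — and the place I would be most careful — is to check that \eqref{eq:C} is genuinely well-posed, i.e.\ that the value of $C(x,v) v$ does not depend on the arbitrary tie-break made on the overlap $x_1 = 0$, $v_1 = 0$. There, using $f_L$ instead of $f_R$ would give $\rD f_L(x) v$; these agree because continuity of $f$ forces $f_L = f_R$ on $\Sigma$, so $\rD f_L(x)$ and $\rD f_R(x)$ coincide on all vectors tangent to $\Sigma$, in particular on vectors with $v_1 = 0$. Apart from this consistency check, the only delicacy is bookkeeping about which half-space $x + \delta v$ occupies in the $x_1 = 0$ case and invoking differentiability of a smooth piece at a boundary point of its closed domain; no genuine analytic obstacle arises.
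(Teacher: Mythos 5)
Your proof is correct and follows essentially the same route as the paper: a case split on the sign of $x_1$ (and of $v_1$ when $x_1=0$), with each case reduced to the first-order expansion of the single $C^1$ piece that represents $f$ at both $x$ and $x+\delta v$, using continuity of $f$ on $\Sigma$. Your extra well-posedness check for $x_1=v_1=0$ is exactly the observation the paper makes in the remark immediately following its proof, namely that continuity forces $\rD f_L(x)v=\rD f_R(x)v$ for vectors tangent to $\Sigma$.
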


\begin{proof}
If $x_1 > 0$ then $(x + \delta v)_1 > 0$ for sufficiently small $\delta > 0$.
Thus $f(x) = f_R(x)$, $f(x + \delta v) = f_R(x + \delta v)$,
and the differentiability of $f_R$ gives \eqref{eq:perturbation1} with $C(x,v) = \rD f_R(x)$.
If $x_1 < 0$ the same arguments apply to $f_L$.

If $x_1 = 0$ and $v_1 \ge 0$ then $(x + \delta v)_1 \ge 0$.
Since $f$ is continuous on $\Sigma$, we again have
$f(x) = f_R(x)$, $f(x + \delta v) = f_R(x + \delta v)$,
and thus \eqref{eq:perturbation1} with $C(x,v) = \rD f_R(x)$.
If $x_1 = 0$ and $v_1 < 0$ the same arguments apply to $f_L$.
\end{proof}

In \eqref{eq:C} our choice of $\rD f_R(x)$ in the special case $x_1 = v_1 = 0$ is immaterial
because although in general $\rD f_L(x)$ and $\rD f_R(x)$ are different matrices,
the continuity of $f$ implies $\rD f_L(x) v = \rD f_R(x) v$.
As an example, Fig.~\ref{fig:unitCircleImage} shows
the set $\left\{ C(\b0,v) v \,\middle|\, v \in \mathbb{S}^1 \right\}$
for the border-collision normal form with the parameter values of Fig.~\ref{fig:stabIterDiamondQQ}a.

\begin{figure}[b!]
\begin{center}
\setlength{\unitlength}{1cm}
\begin{picture}(8,6)
\put(0,0){\includegraphics[width=8cm]{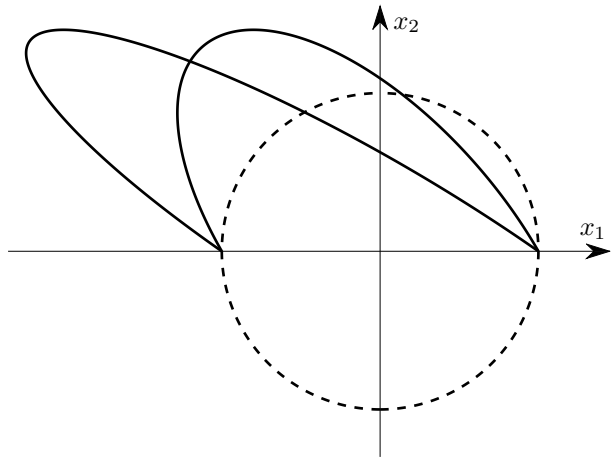}}
\put(7.6,2.95){\footnotesize $x_1$}
\put(5.12,5.7){\footnotesize $x_2$}
\end{picture}
\caption{
The unit circle (dashed) and its image (solid) under the tangent map $v \mapsto C(x,v) v$ with $x = \b0$
for the map \eqref{eq:g} with \eqref{eq:ALAR}, \eqref{eq:params}, and $\delta_R = -1.4$.
\label{fig:unitCircleImage}
} 
\end{center}
\end{figure}

Analogous to \eqref{eq:hSmooth}, we define
\begin{equation}
h(x,v) = \big( f(x), C(x,v) v \big).
\label{eq:h}
\end{equation}
Evidently we have
\begin{equation}
h^n(x,v) = \big( f^n(x), C_n(x,v) v \big),
\label{eq:hn}
\end{equation}
where
\begin{equation}
C_n(x,v) = C \left( h^{n-1}(x,v) \right) C \left( h^{n-2}(x,v) \right) \cdots C(x,v).
\label{eq:Cn}
\end{equation}
This is a cocycle because $C_{m+n}(x,v) = C_m(h^n(x,v)) C_n(x,v)$
for all $m,n \ge 0$.
We now show that $C_n(x,v)$ plays the desired role of $\rD f^n(x)$\removableFootnote{
If $f$ is piecewise-linear with fixed point $x = \b0$, then $C_n(\b0,v) v = f^n(v)$.
I won't mention this because it's super confusing.
}.

\begin{lemma}
For any $x \in \mathbb{R}^d$ and $v \in \rT \mathbb{R}^d$,
\begin{equation}
f^n(x + \delta v) - f^n(x) = \delta C_n(x,v) v + \co(\delta),
\label{eq:perturbationn}
\end{equation}
for all $n \ge 1$.
\label{le:Cn}
\end{lemma}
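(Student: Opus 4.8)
The plan is to prove \eqref{eq:perturbationn} by induction on $n$, the base case $n = 1$ being exactly Lemma \ref{le:C}. What makes the induction go through is a mild strengthening of Lemma \ref{le:C}, which I would record first: for any $y \in \mathbb{R}^d$, $w \in \rT\mathbb{R}^d$, and any function $\psi$ with $\psi(\delta) = \delta w + \co(\delta)$, one has
\[
f\bigl(y + \psi(\delta)\bigr) - f(y) = \delta\, C(y,w)\, w + \co(\delta);
\]
taking $\psi(\delta) = \delta v$ recovers Lemma \ref{le:C}. Granting this, suppose \eqref{eq:perturbationn} holds for some $n \ge 1$ and set $y = f^n(x)$, $w = C_n(x,v)\, v$, and $\psi(\delta) = f^n(x + \delta v) - f^n(x)$, so that the inductive hypothesis says precisely $\psi(\delta) = \delta w + \co(\delta)$. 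Since $h^n(x,v) = (y,w)$ we have $C(y,w) = C\bigl(h^n(x,v)\bigr)$, so the strengthened claim together with the cocycle identity \eqref{eq:Cn} gives
\[
f^{n+1}(x + \delta v) - f^{n+1}(x) = f\bigl(y + \psi(\delta)\bigr) - f(y) = \delta\, C\bigl(h^n(x,v)\bigr) C_n(x,v)\, v + \co(\delta) = \delta\, C_{n+1}(x,v)\, v + \co(\delta),
\]
which is \eqref{eq:perturbationn} with $n+1$ in place of $n$.

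To prove the strengthened claim I would run the same case analysis as in Lemma \ref{le:C}. Suppose first that $y_1 \ne 0$, say $y_1 > 0$. Then $(y + \psi(\delta))_1 > 0$ for all sufficiently small $\delta > 0$, so $f$ agrees with the $C^1$ map $f_R$ at both $y$ and $y + \psi(\delta)$, and differentiability of $f_R$ at $y$ yields $f_R(y + \psi(\delta)) - f_R(y) = \rD f_R(y)\psi(\delta) + r(\psi(\delta))$ with $r(u)/\|u\| \to 0$ as $u \to 0$. Because $\psi(\delta) = \delta w + \co(\delta)$, the fixed matrix $\rD f_R(y)$ sends it to $\delta\, \rD f_R(y) w + \co(\delta)$, while $r(\psi(\delta)) = \co(\delta)$ since $\psi(\delta)/\delta$ stays bounded near $\delta = 0$; as $C(y,w) = \rD f_R(y)$ by \eqref{eq:C}, the claim follows. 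The case $y_1 < 0$ is symmetric, as is $y_1 = 0$ with $w_1 \ne 0$: there the sign of $w_1$ determines the sign of $(y + \psi(\delta))_1$ for all small $\delta$, so again only one of $f_L, f_R$ is in play and the same computation applies, matching the corresponding line of \eqref{eq:C}.

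The remaining case, $y_1 = 0$ and $w_1 = 0$, is the one point that needs care and is the main obstacle. Here $(y + \psi(\delta))_1 = \psi(\delta)_1 = \co(\delta)$ may be positive for some arbitrarily small $\delta$, negative for others, or zero, so $f(y + \psi(\delta))$ must be evaluated using $f_L$ for some $\delta$ and $f_R$ for others. The resolution is that the two branches share their leading term: on whichever set of $\delta$ it applies, $f_S(y + \psi(\delta)) - f_S(y) = \delta\, \rD f_S(y) w + \co(\delta)$ for $S \in \{L,R\}$, and the continuity of $f$ across $\Sigma$ forces $\rD f_L(y) w = \rD f_R(y) w$ precisely because $w_1 = 0$ (the observation in the text immediately following Lemma \ref{le:C}). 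Hence $f(y + \psi(\delta)) - f(y) = \delta\, \rD f_R(y) w + \co(\delta) = \delta\, C(y,w)\, w + \co(\delta)$, the $\co(\delta)$ estimate holding as $\delta \to 0$ regardless of which branch is used at each $\delta$, since a union of finitely many $\co(\delta)$ bounds is again $\co(\delta)$. This closes the induction. Throughout, the only other facts used are that a fixed linear map carries $\co(\delta)$ to $\co(\delta)$ and that finite sums of $\co(\delta)$ terms are $\co(\delta)$.
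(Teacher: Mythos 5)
Your proof is correct, and its skeleton---induction on $n$, with Lemma \ref{le:C} as the base case and the cocycle identity \eqref{eq:Cn} closing the inductive step at the point $h^n(x,v)$---is the same as the paper's. The difference lies in how the inductive step absorbs the $\co(\delta)$ error in the argument of $f$. The paper first replaces $f^k(x+\delta v)$ by $f^k(x) + \delta C_k(x,v) v$ at the cost of an $\co(\delta)$ error in the image, using that $f$ is locally Lipschitz (this is what the phrase ``using also the continuity of $\rD f$'' is doing there), and then applies Lemma \ref{le:C} exactly as stated to the exact perturbation $\delta C_k(x,v) v$. You instead strengthen Lemma \ref{le:C} to perturbations of the form $\delta w + \co(\delta)$ and rerun its case analysis, which forces you to confront the borderline case $y_1 = 0$, $w_1 = 0$, where the true iterate $f^n(x+\delta v)$ may lie on either side of $\Sigma$ for arbitrarily small $\delta$; you resolve it correctly via $\rD f_L(y) w = \rD f_R(y) w$ (the observation recorded after Lemma \ref{le:C}). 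The paper's substitution trick is shorter and keeps that delicate case entirely inside Lemma \ref{le:C}, where the perturbed point is exactly $y + \delta w$ and so cannot stray unpredictably across $\Sigma$; your strengthened one-step lemma makes explicit why the possible side-switching of the true orbit is harmless, a point the paper's one-line appeal to Lipschitz continuity leaves implicit. Both arguments are sound.
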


\begin{proof}
The result is true for $n = 1$ by Lemma \ref{le:C}.
Suppose the result is true for some $n = k \ge 1$.
Then
\begin{equation}
f^k(x + \delta v) - f^k(x) = \delta C_k(x,v) v + \co(\delta),
\nonumber
\end{equation}
which implies
\begin{equation}
f^{k+1}(x + \delta v) - f^{k+1}(x) =
f \left( f^k(x) + \delta C_k(x,v) v \right) - f^{k+1}(x) + \co(\delta),
\nonumber
\end{equation}
using also the continuity of $\rD f$.
By then applying Lemma \ref{le:C} to $h^k(x,v)$ we obtain
\begin{equation}
f^{k+1}(x + \delta v) - f^{k+1}(x) =
\delta C(h^k(x,v)) C_k(x,v) v + \co(\delta),
\nonumber
\end{equation}
and thus, by the cocycle property, the result is also true for $n = k+1$.
Hence the result is true for all $n \ge 1$ by induction.
\end{proof}

In view of Lemma \ref{le:Cn}, we define
\begin{equation}
\lambda(x,v) = \lim_{n \to \infty} \frac{1}{n} \ln \left( \left\| C_n(x,v) v \right\| \right),
\label{eq:lyapPWS}
\end{equation}
if the limit exists.
Oseledets' theorem does not apply to \eqref{eq:lyapPWS}
because the cocycle $C_n(x,v)$ is dependent on $v$.
Indeed it is not difficult to find examples for which $\lambda(x,v)$ takes more than $d$ distinct values.
Consider for instance the one-dimensional map
\begin{equation}
g(x) = \begin{cases}
a_L x, & x \le 0, \\
a_R x, & x \ge 0,
\end{cases}
\label{eq:gEx1}
\end{equation}
where $a_L, a_R > 0$, see Fig.~\ref{fig:cobweb1dEx}.
For the fixed point $x=0$, we have $\lambda(0,-1) = \ln(a_L)$ and $\lambda(0,1) = \ln(a_R)$.
Thus if $a_L \ne a_R$ then \eqref{eq:lyapPWS} takes two different values at $x=0$.

\begin{figure}[b!]
\begin{center}
\setlength{\unitlength}{1cm}
\begin{picture}(6,6)
\put(0,0){\includegraphics[width=6cm]{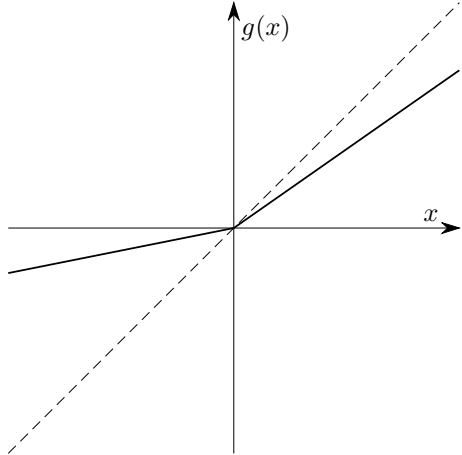}}
\put(5.52,3.09){\footnotesize $x$}
\put(3.1,5.56){\footnotesize $g(x)$}
\end{picture}
\caption{
A sketch of the one-dimensional piecewise-linear map \eqref{eq:gEx1}.
\label{fig:cobweb1dEx}
} 
\end{center}
\end{figure}

For the two-dimensional map \eqref{eq:g} with \eqref{eq:ALAR}
it was found in \cite{Si16d} that
the normalised tangent map $v \mapsto \frac{C(x,v) v}{\| C(x,v) v \|}$ on $\mathbb{S}^1$
appears to be chaotic with infinitely many ergodic invariant probability measures
each generating a potentially distinct value for \eqref{eq:lyapPWS} at $x = \b0$.

\subsection{Determinant-based bounds on the maximal Lyapunov exponent}
\label{sub:bound}

Let $f_\mu$ be a continuous, piecewise-$C^1$ map of the form \eqref{eq:fPWS}.
We have shown that if $x = \b0$ is an asymptotically stable fixed point of $f_0$
then $f_\mu$ has an attractor $\Lambda$ near $x = \b0$ when $|\mu|$ is small (see Theorem \ref{th:attractingSet}).
Here we construct a lower bound for the maximum value of the Lyapunov exponent \eqref{eq:lyapPWS}
for $x \in \Lambda$.

Let $\Omega \subset \mathbb{R}^d$ be a compact set and let
\begin{equation}
\begin{split}
a_L &= \min \left\{ \left| \det \left( \rD f_{L,\mu}(x) \right) \right| \,\big|\,
x \in \Omega,\, x_1 \le 0 \right\}, \\
a_R &= \min \left\{ \left| \det \left( \rD f_{R,\mu}(x) \right) \right| \,\big|\,
x \in \Omega,\, x_1 \ge 0 \right\}.
\end{split}
\label{eq:aLaR}
\end{equation}
The idea is that $\Omega$ is small, so $a_L$ and $a_R$ are usefully approximated
by $|\det(A_L)|$ and $|\det(A_R)|$, and $\Lambda \subset \Omega$,
so results for $x \in \Omega$ immediately apply to $x \in \Lambda$.
In fact for the purposes of showing that a chaotic attractor is created in the BCB at $\mu$ = 0,
by Theorem \ref{th:attractingSet} we can make $\Omega$ as small as we like
(and still allow $\mu \ne 0$) hence $a_L$ and $a_R$ can be made to be arbitrarily close to
$|\det(A_L)|$ and $|\det(A_R)|$.

Given $x \in \Omega$, let $\ell_n$ [resp.~$r_n$]
be the number of iterates $i \in \{ 0,\ldots,n-1 \}$ for which $f^i(x)_1 < 0$ [resp.~$f^i(x)_1 > 0$].
Also let
\begin{align}
\ell &= \liminf_{n \to \infty} \frac{\ell_n}{n}, &
r &= \liminf_{n \to \infty} \frac{r_n}{n}.
\label{eq:lr}
\end{align}
These quantities are $x$-dependent but in our notation we have
omitted this dependency for brevity.

\begin{theorem}
Suppose $f^i(x) \in \Omega$ for all $i \ge 0$.
Let
\begin{equation}
a = \min \left[ a_L^\ell a_R^{1-\ell},
a_L^{1-r} a_R^r \right],
\label{eq:detbound}
\end{equation}
and suppose $a > 0$.
Let
\begin{equation}
\lambda_{\rm bound} = \frac{1}{d} \ln \left( 2^{\ell+r-1} a \right).
\label{eq:lambdabound}
\end{equation}
Then
\begin{equation}
\liminf_{n \to \infty} \frac{1}{n} \ln \left( \left\| C_n(x,v) v \right\| \right) \ge \lambda_{\rm bound} \,,
\label{eq:mainBound}
\end{equation}
for almost all $v \in T \mathbb{R}^d$.
\label{th:main}
\end{theorem}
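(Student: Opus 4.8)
The plan is to view the tangent dynamics as a composition of continuous, linearly homogeneous, piecewise-linear maps, and then to use Lebesgue measure in tangent space to bound how many directions $v$ are ``bad'' at each time $n$. For each fixed $y\in\mathbb R^d$ the map $T_y(w):=C(y,w)w$ is continuous, linearly homogeneous and piecewise-linear: by \eqref{eq:C} it is the single linear map $\rD f_L(y)$ if $y_1<0$ and $\rD f_R(y)$ if $y_1>0$, while if $y_1=0$ it equals $\rD f_L(y)$ on $\{w_1<0\}$ and $\rD f_R(y)$ on $\{w_1\ge 0\}$, the two pieces agreeing on $\{w_1=0\}$ by continuity of $f$. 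Since $h(x,v)=(f(x),C(x,v)v)$, iteration gives $C_n(x,v)v=\Phi_n(v)$ where $\Phi_n:=T_{f^{n-1}(x)}\circ\cdots\circ T_{f^0(x)}$ is again continuous and linearly homogeneous, and $C_n(x,\alpha v)=C_n(x,v)$ for $\alpha>0$. Hence $\frac1n\ln\|C_n(x,v)v\|$ and $\frac1n\ln\|C_n(x,v/\|v\|)(v/\|v\|)\|$ differ by $\frac1n\ln\|v\|\to 0$, so it suffices to prove \eqref{eq:mainBound} for $\sphMeas$-almost all $v\in\mathbb S^{d-1}$ (a $\sphMeas$-null subset of the sphere spans a Lebesgue-null cone in $\mathbb R^d$). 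We may also assume $a_L,a_R>0$; the degenerate case $a_La_R=0$ forces $\{\ell,r\}=\{0,1\}$ and would be treated separately.

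For the core estimate, fix $\rho>0$ and $n\ge 1$ and let $G_n(\rho)=\{v\in\mathbb S^{d-1}:\|\Phi_n(v)\|<\rho\}$, which is open hence measurable. By homogeneity, if $v\in G_n(\rho)$ and $0<\alpha\le 1$ then $\|\Phi_n(\alpha v)\|=\alpha\|\Phi_n(v)\|<\rho$, so the cone cap $\{\alpha v:v\in G_n(\rho),\,0\le\alpha\le 1\}$ lies in $\Phi_n^{-1}(B_\rho(\b0))$; with the definition \eqref{eq:sphMeas} this gives $\sphMeas(G_n(\rho))\le{\rm meas}(\Phi_n^{-1}(B_\rho(\b0)))/{\rm meas}(B_1(\b0))$. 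To bound the numerator I would iterate backwards: put $V_n=B_\rho(\b0)$ and $V_i=T_{f^i(x)}^{-1}(V_{i+1})$, so $V_0=\Phi_n^{-1}(B_\rho(\b0))$. Using $f^i(x)\in\Omega$: if $f^i(x)_1<0$ then $T_{f^i(x)}$ is linear with $|\det|\ge a_L$, so ${\rm meas}(V_i)\le{\rm meas}(V_{i+1})/a_L$; likewise ${\rm meas}(V_i)\le{\rm meas}(V_{i+1})/a_R$ if $f^i(x)_1>0$; and if $f^i(x)_1=0$ then $T_{f^i(x)}^{-1}(V_{i+1})\subset\rD f_L(f^i(x))^{-1}(V_{i+1})\cup\rD f_R(f^i(x))^{-1}(V_{i+1})$ with both Jacobian determinants $\ge\min(a_L,a_R)$ in modulus, giving ${\rm meas}(V_i)\le 2\,{\rm meas}(V_{i+1})/\min(a_L,a_R)$. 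Multiplying the $n$ inequalities and using ${\rm meas}(B_\rho(\b0))=\rho^d{\rm meas}(B_1(\b0))$ yields
\[
\sphMeas(G_n(\rho))\le\rho^d\,a_L^{-\ell_n}a_R^{-r_n}\left(\frac{2}{\min(a_L,a_R)}\right)^{n-\ell_n-r_n}.
\]
(Incidentally the same per-step bounds give $|\det C_n(x,v)|\ge a_L^{\ell_n}a_R^{r_n}\min(a_L,a_R)^{n-\ell_n-r_n}$, which is the source of the factor $\frac1d$: the determinant controls the product of the $d$ singular values.)

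To extract the exponent, assume without loss of generality $a_L\le a_R$ (the other case is symmetric). Taking logarithms in the display, dividing by $n$ and letting $n\to\infty$, the inputs are: $\liminf r_n/n=r$ paired with the constant, nonpositive coefficient $\ln(a_L/a_R)$; the constant positive coefficient $\ln 2$ paired with $\limsup(n-\ell_n-r_n)/n=1-\liminf(\ell_n+r_n)/n\le 1-\ell-r$; and the identity $a=a_L^{1-r}a_R^{r}$, valid because $(1-\ell-r)(\ln a_L-\ln a_R)\le 0$. These give
\[
\limsup_{n\to\infty}\left(\frac1n\ln\sphMeas(G_n(\rho_n))-\frac dn\ln\rho_n\right)\le-\ln\!\left(2^{\ell+r-1}a\right)=-d\,\lambda_{\rm bound}.
\]
Now fix $s<\lambda_{\rm bound}$ and take $\rho_n={\rm e}^{ns}$: then $\limsup_n\frac1n\ln\sphMeas(G_n({\rm e}^{ns}))\le d(s-\lambda_{\rm bound})<0$, so $\sum_n\sphMeas(G_n({\rm e}^{ns}))<\infty$. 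By Borel--Cantelli, for $\sphMeas$-almost all $v\in\mathbb S^{d-1}$ we have $v\notin G_n({\rm e}^{ns})$ for all large $n$, i.e.\ $\|C_n(x,v)v\|=\|\Phi_n(v)\|\ge{\rm e}^{ns}$, hence $\liminf_n\frac1n\ln\|C_n(x,v)v\|\ge s$. Intersecting the full-measure sets obtained for a sequence $s=s_k\uparrow\lambda_{\rm bound}$ gives the inequality with $s$ replaced by $\lambda_{\rm bound}$ for $\sphMeas$-almost all $v\in\mathbb S^{d-1}$, and the reduction in the first paragraph promotes this to Lebesgue-almost all $v\in T\mathbb R^d$.

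The main obstacle I anticipate is the $\limsup$ computation above: one must avoid prematurely replacing $n-\ell_n-r_n$ by $(1-\ell-r)n$ (not a legitimate bound when $\min(a_L,a_R)>2$) and instead use $\limsup(c_nx_n)=c\liminf x_n$ only where the coefficient $c_n\equiv c$ is genuinely constant, together with the subadditivity of $\liminf$ needed to pass from the separate densities $\ell$ and $r$ to $\ell+r$; a secondary bookkeeping point is confirming that the factor $2$, incurred only at iterates lying on the switching manifold, assembles with the determinant growth into exactly the constant $2^{\ell+r-1}$ appearing in $\lambda_{\rm bound}$.
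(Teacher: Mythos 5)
Your proof is correct and, while it rests on the same underlying mechanism as the paper's (the determinant controls the spherical measure of the directions that get contracted, a factor of $2$ is paid only at iterates on $\Sigma$, and a summability argument over $n$ finishes the job), it is organised genuinely differently. The paper decomposes the bad set $\gamma_{\ee,n}$ over the $2^{n-(\ell_n+r_n)}$ admissible words $\cS\in\Xi_n$, applies the standalone Lemma \ref{le:measureBound} to each matrix $\rD f_\cS(x)$, and closes with an explicit geometric tail sum after an $\ee/3$ splitting; you instead pull the ball $B_\rho(\b0)$ backwards through the tangent maps $T_{f^i(x)}$ one step at a time (cost $1/a_L$ or $1/a_R$ off $\Sigma$, cost $2/\min(a_L,a_R)$ at $\Sigma$-hits) and close with Borel--Cantelli at the thresholds $\rho_n={\rm e}^{ns}$, $s\uparrow\lambda_{\rm bound}$. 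The two bookkeepings yield the identical estimate $\rho^d\,2^{\,n-\ell_n-r_n}\big/\bigl(a_L^{\ell_n}a_R^{\,n-\ell_n}\bigr)$ after the ordering of $a_L,a_R$ is fixed, and your limit computation (constant-sign coefficients, superadditivity of $\liminf$ for $\ell_n+r_n$, and $a=a_L^{1-r}a_R^{\,r}$ under that ordering) is a correct analogue of the paper's $\ee/3$ bookkeeping, so the constant $2^{\ell+r-1}a$ comes out exactly; the reduction from $\mathbb{S}^{d-1}$ to almost all $v\in T\mathbb{R}^d$ via homogeneity matches the paper's. What your packaging buys is that the word count and the measure lemma are absorbed into a single induction on preimage measures, and the tail estimate is outsourced to Borel--Cantelli; what the paper's buys is a reusable standalone lemma and an explicit link to the symbolic itinerary, which is what Remarks \ref{re:bound2}--\ref{re:bound3} exploit. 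The one loose end you leave, the case $a_La_R=0$, is at parity with the paper rather than a gap relative to it: the paper's argument also implicitly requires $a_L,a_R>0$ (Lemma \ref{le:measureBound} needs a nonzero determinant, and the display following \eqref{eq:mainProof4} divides by $a_L^{\ell_n}a_R^{\,n-\ell_n}$), so the cleanest fix in both write-ups is to state that positivity assumption outright rather than defer the degenerate case.
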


A proof of Theorem \ref{th:main} follows some technical remarks.
If $\lambda_{\rm bound} > 0$ and the limit \eqref{eq:lyapPWS} exists,
then $\lambda(x,v) > 0$ for some (in fact almost all) $v$,
and so the maximal Lyapunov exponent is positive.
If $\lambda_{\rm bound} > 0$ but the limit \eqref{eq:lyapPWS} does not exist,
\eqref{eq:mainBound} still ensures the dynamics is locally expanding by Lemma \ref{le:Cn}.

\begin{remark}
If $\nu$ is the invariant probability measure associated with an attractor $\Lambda \subset \Omega$,
we often have $\nu(\Sigma) = 0$ (exceptions include periodic solutions with one or more points on $\Sigma$).
In this case $\frac{\ell_n + r_n}{n} \to 1$ as $n \to \infty$ for typical $x \in \Lambda$.
Then usually $\ell + r = 1$\removableFootnote{
If $\nu$ is ergodic I think we must have $\ell + r = 1$ for $\nu$ almost all $x$
but I haven't quite been able to prove it
(note that if $\nu = \delta_\b0$ and $f^i(x) \to \b0$ asymptotically
we may have $\ell + r = 1$ even though $\nu(\Sigma) \ne 0$,
but in this example $x$ is not $\nu$-typical).

Here is my attempted proof of the contrapositive:
For each $k \ge 1$ we define a continuous function $\varphi_k : \mathbb{R}^d \to \mathbb{R}$ by
\begin{equation}
\varphi_k(x) = \begin{cases}
1 + k x_1 \,, & -\frac{1}{k} \le x_1 \le 0, \\
1 - k x_1 \,, & 0 \le x_1 \le \frac{1}{k}, \\
0, & {\rm otherwise}.
\end{cases}
\nonumber
\end{equation}
By Birkhoff's ergodic theorem \cite{Wa82}
\begin{equation}
\int_{\mathbb{R}^d} \varphi_k \,d\nu = \lim_{n \to \infty} \frac{1}{n}
\sum_{i=0}^{n-1} \varphi_k \left( f^i(x) \right).
\label{eq:Birkhoff}
\end{equation}
If $\ell + r < 1$, then there are infinitely many values of $n$
for which $\frac{\ell_n + r_n}{n}$ is less than some value less than $1$, call it $1 - \tilde{q}$.
For each of these values of $n$ we have
$\frac{1}{n} \sum_{i=0}^{n-1} \varphi_k \left( g^i(x) \right) \ge \tilde{q}$ for all $k$.
Thus the right hand-side of \eqref{eq:Birkhoff} is nonzero for all $k$.
But the left hand-side of \eqref{eq:Birkhoff} limits to $\nu(\Sigma)$ as $k \to \infty$
(except I don't know how to show this!).
}
which implies $a = a_L^\ell a_R^r$.
Here $a$ is the weighted geometric mean of $a_L$ and $a_R$ induced by the forward orbit of $x$.
Then $\lambda_{\rm bound} = \frac{1}{d} \ln(a)$,
therefore $a > 1$ implies $\lambda(x,v) > 0$ (assuming \eqref{eq:lyapPWS} exists).
\label{re:bound2}
\end{remark}

\begin{remark}
In practice, for a given map one may be able to obtain estimates on $\ell$ and $r$ throughout a set $\Omega$ \cite{Gl15b}.
For example if it is not possible for three consecutive iterates to lie in $x_1 < 0$
(i.e.~$LLL$ is a {\em forbidden word} \cite{LiMa95}) then $\ell \le \frac{2}{3}$.
If it is only known that $\ell + r = 1$, then $a = a_L^\ell a_R^r \ge \min(a_L,a_R)$.
So if $a_L$ and $a_R$ are both greater than $1$ we must have $\lambda(x,v) > 0$ (assuming \eqref{eq:lyapPWS} exists)
which makes sense because in this case the map is area-expanding
on both sides of $\Sigma$.
\label{re:bound3}
\end{remark}

\begin{remark}
It is instructive to apply Theorem \ref{th:main} to $x = \b0$ when this point is a fixed point.
In this case $\ell = r = 0$, so $a = \min(a_L,a_R)$ and
$\lambda_{\rm bound} = \frac{1}{d} \ln \left( \frac{a}{2} \right)$.
Thus even if $a_L$ and $a_R$ are both greater than $1$ (but not both greater than $2$)
it is possible to have $\lambda(x,v) < 0$
and thus possible for the fixed point to be stable (as in Fig.~\ref{fig:stabIterDiamondQQ}a).
\end{remark}

Our proof of Theorem \ref{th:main} is based on the following
upper bound for the measure of the set of unit tangent vectors $v$
for which $\| A v \|$ is `small', where $A$ is a matrix.
The bound is crude but far-reaching and well-suited for Theorem \ref{th:main}
because the only information of $A$ that is used is its determinant.

\begin{lemma}
Let $A$ be a real-valued $d \times d$ matrix with $\det(A) \ne 0$, and let $c > 0$.
Then\removableFootnote{
If $d = 2$ and $|\det(A)| > 1$, numerical results
(see {\sc goMeasureContractingExample2.m}) suggest the stronger result
\begin{equation}
\sphMeas \left( \left\{ v \in \mathbb{S}^{d-1} \,\middle|\,
\| A v \| \le c \right\} \right) \le \frac{c^d}{2 |\det(A)|}.
\nonumber
\end{equation}
However, this wouldn't lead to an improvement on Theorem \ref{th:main}
because it's still a linear dependency on $\frac{1}{|\det(A)|}$.
It would be interesting to see if the factor of $2$ changes to something else for $d > 2$.
It would also be interesting to search for the optimal bound as a function of $|\det(A)|$;
I think this is doable with $d = 2$ by directly going through the algebra.
}
\begin{equation}
\sphMeas \left( \left\{ v \in \mathbb{S}^{d-1} \,\middle|\,
\| A v \| \le a \right\} \right) \le \frac{a^d}{|\det(A)|}.
\label{eq:measureBound}
\end{equation}
\label{le:measureBound}
\end{lemma}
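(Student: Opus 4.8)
The plan is to reduce the claim to the case where $A$ is diagonal (in fact a positive scaling along the coordinate axes) via the singular value decomposition, and then estimate the relevant spherical measure directly. Write $A = U \Sigma V$ where $U, V$ are orthogonal and $\Sigma = \mathrm{diag}(s_1,\ldots,s_d)$ with $s_1 \ge \cdots \ge s_d > 0$ the singular values of $A$, so that $|\det(A)| = s_1 \cdots s_d$. Since $U$ is orthogonal, $\|A v\| = \|\Sigma V v\|$, and since $V$ is orthogonal it maps $\mathbb{S}^{d-1}$ to itself and preserves spherical measure (equivalently, it preserves Lebesgue measure on $B_1(\b0)$, which is how $\sphMeas$ is defined in \eqref{eq:sphMeas}). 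Hence
\begin{equation}
\sphMeas \left( \left\{ v \in \mathbb{S}^{d-1} \,\middle|\, \| A v \| \le a \right\} \right)
= \sphMeas \left( \left\{ w \in \mathbb{S}^{d-1} \,\middle|\, \| \Sigma w \| \le a \right\} \right),
\nonumber
\end{equation}
so it suffices to prove \eqref{eq:measureBound} for $\Sigma$ in place of $A$.

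Next I would pass from the spherical measure of a subset of $\mathbb{S}^{d-1}$ to the Lebesgue measure of the corresponding solid cone in $B_1(\b0)$, using definition \eqref{eq:sphMeas} directly. Let $\gamma = \{ w \in \mathbb{S}^{d-1} : \|\Sigma w\| \le a \}$ and let $K = \{ \alpha w : w \in \gamma,\, 0 \le \alpha \le 1 \}$ be its associated cone, so $\sphMeas(\gamma) = \mathrm{meas}(K)/\mathrm{meas}(B_1(\b0))$. The key geometric observation is that $K$ is contained in the set $E = \{ y \in \mathbb{R}^d : \|\Sigma y\| \le a \}$: indeed if $y = \alpha w$ with $w \in \gamma$ and $0 \le \alpha \le 1$, then $\|\Sigma y\| = \alpha \|\Sigma w\| \le \|\Sigma w\| \le a$. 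But $E$ is exactly the solid ellipsoid with semi-axes $a/s_1,\ldots,a/s_d$, whose volume is $\mathrm{meas}(B_1(\b0)) \cdot \prod_i (a/s_i) = \mathrm{meas}(B_1(\b0)) \cdot a^d / |\det(A)|$. Therefore
\begin{equation}
\sphMeas(\gamma) = \frac{\mathrm{meas}(K)}{\mathrm{meas}(B_1(\b0))}
\le \frac{\mathrm{meas}(E)}{\mathrm{meas}(B_1(\b0))}
= \frac{a^d}{|\det(A)|},
\nonumber
\end{equation}
which is the desired bound. (Note the statement has a harmless typo, writing $c > 0$ but then $a$ in the displayed inequality; I would simply phrase everything in terms of $a$.)

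I do not expect a serious obstacle here — the argument is essentially the two lines above. The only points requiring a little care are: (i) confirming that orthogonal maps preserve $\sphMeas$, which follows because they preserve both $\mathrm{meas}$ (Lebesgue measure is rotation-invariant) and the unit ball, and they commute with radial scaling so they send cones to cones; and (ii) the volume formula for an ellipsoid, which is just the change-of-variables $y \mapsto \Sigma^{-1}$-image of $B_a(\b0)$ composed with the scaling relation $\mathrm{meas}(B_a(\b0)) = a^d \,\mathrm{meas}(B_1(\b0))$. If one wants to avoid invoking the SVD explicitly, an alternative is to apply the linear change of variables $y = A^{-1}z$ directly to the cone $K$ and track how $\mathrm{meas}$ transforms by the factor $|\det(A^{-1})| = 1/|\det(A)|$, together with the containment $A K \subset B_a(\b0)$; this gives the same bound and may read more cleanly. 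Either way the proof is short, and the looseness flagged in the footnote (the possible extra factor of $2$ when $|\det(A)| > 1$) is genuinely not needed for Theorem \ref{th:main}.
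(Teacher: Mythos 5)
Your proposal is correct and, at its core, identical to the paper's argument: the paper skips the SVD entirely and uses exactly the ``alternative'' you mention at the end, namely reducing to $a=1$ by rescaling $A$, observing that the cone $\Gamma$ over $\gamma$ satisfies $\Gamma \subset A^{-1}B_1(\b0)$, and bounding ${\rm meas}(\Gamma)$ by ${\rm meas}(B_1(\b0))/|\det(A)|$ via the determinant scaling of Lebesgue measure. The SVD reduction is harmless but unnecessary, since the containment-plus-change-of-variables step works for general invertible $A$ directly.
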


\begin{figure}[b!]
\begin{center}
\setlength{\unitlength}{1cm}
\begin{picture}(8,6)
\put(0,0){\includegraphics[width=8cm]{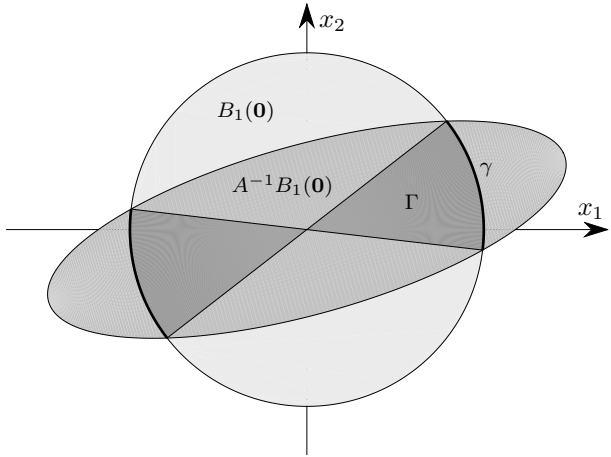}}
\put(7.6,3.2){\footnotesize $x_1$}
\put(4.16,5.7){\footnotesize $x_2$}
\put(2.8,4.5){\scriptsize $B_1(\b0)$}
\put(3,3.5){\scriptsize $A^{-1} B_1(\b0)$}
\put(5.3,3.28){\scriptsize $\Gamma$}
\put(6.3,3.76){\scriptsize $\gamma$}
\end{picture}
\caption{
A sketch of the geometric elements introduced in the proof of Lemma \ref{le:measureBound}.
\label{fig:schemPWExpLemma}
} 
\end{center}
\end{figure}

\begin{proof}
In view of the substitution $A \mapsto c A$ it suffices to consider $c = 1$.
The set under consideration is then
$\gamma = \left\{ v \in \mathbb{S}^{d-1} \,\middle|\, \| A v \| \le 1 \right\}$.
The left hand-side of \eqref{eq:measureBound} is
\begin{equation}
\sphMeas(\gamma) = \frac{{\rm meas}(\Gamma)}{{\rm meas}(B_1(0))},
\label{eq:measureBoundProof1}
\end{equation}
where $\Gamma = \left\{ \alpha v \,\middle|\, v \in \gamma,\, 0 \le \alpha \le 1 \right\}$.
Notice $\| A u \| \le 1$ for all $u \in \Gamma$.
Therefore $\Gamma$ is a subset of
$A^{-1} B_1(\b0) = \left\{ A^{-1} v \,\middle|\, v \in B_1(\b0) \right\}$, see Fig.~\ref{fig:schemPWExpLemma}.
Under multiplication by $A^{-1}$ the Lebesgue measure of a measurable set
is scaled by the factor $|\det(A^{-1})| = \frac{1}{|\det(A)|}$, thus
\begin{equation}
{\rm meas} \left( A^{-1} B_1(0) \right) = \frac{{\rm meas}(B_1(0))}{|\det(A)|}.
\label{eq:measureBoundProof2}
\end{equation}
Since $\Gamma \subset A^{-1} B_1(0)$,
by combining \eqref{eq:measureBoundProof1} and \eqref{eq:measureBoundProof2}
we obtain \eqref{eq:measureBound} with $c = 1$, as required.
\end{proof}

\begin{proof}[Proof of Theorem \ref{th:main}]
Choose any $\ee > 0$.
Let
\begin{equation}
\gamma_\ee = \left\{ v \in \mathbb{S}^{d-1} \,\middle|\,
\liminf_{n \to \infty} \frac{1}{n} \ln \left( \left\| C_n(x,v) v \right\| \right) \le \lambda_{\rm bound} - 2 \ee \right\},
\label{eq:gammaee}
\end{equation}
and
\begin{equation}
\gamma_{\ee,n} = \left\{ v \in \mathbb{S}^{d-1} \,\middle|\,
\frac{1}{n} \ln \left( \left\| C_n(x,v) v \right\| \right) \le \lambda_{\rm bound} - \ee \right\},
\label{eq:gammaeen}
\end{equation}
for all $n \ge 1$.
If $v \in \gamma_\ee$, then $v \in \gamma_{\ee,n}$, for infinitely many values of $n \ge 1$.
Thus for all $N \ge 1$ we have $\gamma_\ee \subset \bigcup_{n \ge N} \gamma_{\ee,n}$.
Below we show that
\begin{equation}
\sphMeas \left( \gamma_{\ee,n} \right) \le {\rm e}^{-\frac{d \ee n}{3}},
\label{eq:mainProof1}
\end{equation}
for sufficiently large values of $n$.
This will complete the proof because it implies \linebreak               
$\sphMeas \left( \bigcup_{n \ge N} \gamma_{\ee,n} \right)
\le \frac{{\rm e}^{\frac{-d \ee N}{3}}}{1 - {\rm e}^{\frac{-d \ee}{3}}} \to 0$ as $N \to \infty$.
Hence $\sphMeas \left( \gamma_\ee \right) = 0$
and so, because we can take $\ee > 0$ arbitrarily small,
\eqref{eq:mainBound} holds for almost all $v \in \mathbb{S}^{d-1}$.
The left hand-side of \eqref{eq:mainBound} is independent of $\| v \|$,
thus \eqref{eq:mainBound} also holds for almost all $v \in T \mathbb{R}^d$.

Given $n \ge 1$, let $\{ L, R \}^n$ denote the set of words of length $n$ involving the symbols $L$ and $R$.
We index the elements of any $\cS \in \{ L,R \}^n$ from $i = 0$ to $i = n-1$,
and write $\cS = \cS_0 \cS_1 \cdots \cS_{n-1}$.
Let
\begin{align}
\Xi_n = \big\{ \cS \in \{ L,R \}^n \,\big|\,
&\cS_i = L {\rm ~if~} f^i(x)_1 < 0 {\rm ~and~} \cS_i = R {\rm ~if~} f^i(x)_1 > 0, \nonumber \\
&{\rm for~all~} i \in \{ 0,\ldots,n-1 \} \big\}.
\label{eq:Xin}
\end{align}
The set $\Xi_n$ contains $2^{n - (\ell_n + r_n)}$ words because
to create a word $\cS \in \Xi_n$,
we are free to choose either $\cS_i = L$ or $\cS_i = R$ only if $f^i(x) = 0$.
The number of indices $i \in \{ 0,\ldots,n-1 \}$ for which
$f^i(x) = 0$ is $n - (\ell_n + r_n)$.
Thus we have $n - (\ell_n + r_n)$ independent choices between two symbols,
so a total of $2^{n - (\ell_n + r_n)}$ words.

Given $v \in \mathbb{S}^{d-1}$, by \eqref{eq:C} and \eqref{eq:Cn} we have
$C_n(x,v) = D f_{\cS_{n-1}} \left( f^{n-1}(x) \right) \cdots D f_{\cS_0}(x)$
for some $\cS \in \{ L,R \}^n$.
We can write this as
\begin{equation}
C_n(x,v) = D f_\cS(x),
\nonumber
\end{equation}
where $f_\cS = f_{\cS_{n-1}} \circ \cdots \circ f_{\cS_0}$
denotes the composition of $f_L$ and $f_R$ in the order determined by $\cS$.
But $\cS \in \Xi_n$ because by \eqref{eq:C} and \eqref{eq:Cn} we must have
$\cS_i = L$ if $f^i(x)_1 < 0$ and $\cS_i = R$ if $f^i(x)_1 > 0$,
for all $i \in \{ 0,\ldots,n-1 \}$.

We cannot apply Lemma \ref{le:measureBound} to the set $\gamma_{\ee,n}$
by using $A = C_n(x,v)$ because this matrix depends on $v$.
For this reason we introduce the set
\begin{equation}
\gamma_{\ee,n}^\cS = \left\{ v \in \mathbb{S}^{d-1} \,\middle|\,
\frac{1}{n} \ln \left( \left\| D f_\cS(x) v \right\| \right) \le \lambda_{\rm bound} - \ee \right\},
\nonumber
\end{equation}
for a given word $\cS \in \Xi_n$.
Then $\gamma_{\ee,n} \subset \bigcup_{\cS \in \Xi_n} \gamma_{\ee,n}^\cS$, and so
\begin{equation}
\sphMeas(\gamma_{\ee,n}) \le \sum_{\cS \in \Xi_n} \sphMeas \left( \gamma_{\ee,n}^\cS \right).
\label{eq:mainProof2}
\end{equation}
By Lemma \ref{le:measureBound}\removableFootnote{
The $\ee$'s have been introduced in view of this step.
For example if $f$ is the piecewise-linear map $g$ with $A_L = A_R = k I$, where $k > 1$,
then the right hand-side of \eqref{eq:mainProof3} is $\frac{k^n {\rm e}^{-d n \ee}}{k^n} = {\rm e}^{-d n \ee}$.
For this to go to zero as $n \to \infty$ exponentially (or even just be less than $1$)
we need $\ee > 0$.
},
\begin{equation}
\sphMeas \left( \gamma_{\ee,n}^\cS \right) \le
\frac{{\rm e}^{d n (\lambda_{\rm bound} - \ee)}}{\left| \det \left( D f_\cS(x) \right) \right|}.
\label{eq:mainProof3}
\end{equation}

For the remainder of the proof we assume $a_L \ge a_R$ without loss of generality.
By \eqref{eq:aLaR}, $\left| \det \left( D f_\cS(x) \right) \right| \ge a_L^k a_R^{n-k}$,
where $k$ is the number of $L$'s in the word $\cS$.
Since $\ell_n \le k \le n - r_n$ and $a_L \ge a_R$ we have
\begin{equation}
\left| \det \left( D f_\cS(x) \right) \right| \ge a_L^{\ell_n} a_R^{n-\ell_n}.
\label{eq:mainProof4}
\end{equation}
By substituting \eqref{eq:lambdabound} and \eqref{eq:mainProof4} into \eqref{eq:mainProof3} we obtain
\begin{equation}
\sphMeas \left( \gamma_{\ee,n}^\cS \right) \le
\frac{2^{(\ell + r - 1)n} c^n {\rm e}^{-d n \ee}}{c_L^{\ell_n} c_R^{n-\ell_n}}.
\nonumber
\end{equation}
Since $\Xi_n$ contains $2^{n - (\ell_n + r_n)}$ words, \eqref{eq:mainProof2} then implies
\begin{equation}
\sphMeas(\gamma_{\ee,n})
\le \frac{2^{(\ell + r)n - (\ell_n + r_n)} c^n {\rm e}^{-d n \ee}}{c_L^{\ell_n} c_R^{n-\ell_n}}
= \left( \frac{2^{\ell + r - \frac{\ell_n + r_n}{n}} c \,{\rm e}^{-d \ee}}
{c_L^{\frac{\ell_n}{n}} c_R^{1 - \frac{\ell_n}{n}}} \right)^n.
\label{eq:mainProof5}
\end{equation}

Since $\liminf_{n \to \infty} \frac{\ell_n + r_n}{n} = \ell + r$,
there exists $N_1 \in \mathbb{Z}$ such that for all $n \ge N_1$ we have
$\frac{\ell_n + r_n}{n} \ge \ell + r - \frac{d \ee}{3 \ln(2)}$, that is
$2^{\ell + r - \frac{\ell_n + r_n}{n}} \le {\rm e}^{\frac{d \ee}{3}}$.
Similarly, since $\liminf_{n \to \infty} \frac{\ell_n}{n} = \ell$,
there exists $N_2 \in \mathbb{Z}$ such that for all $n \ge N_2$ we have
$\left( \frac{c_L}{c_R} \right)^{\frac{\ell_n}{n}} \ge
\left( \frac{c_L}{c_R} \right)^\ell {\rm e}^{\frac{-d \ee}{3}}$, that is
$\frac{c}{c_L^{\frac{\ell_n}{n}} c_R^{1 - \frac{\ell_n}{n}}} \le {\rm e}^{\frac{d \ee}{3}}$.
By inserting these bounds into \eqref{eq:mainProof5} we obtain \eqref{eq:mainProof1}
(valid for all $n \ge \max(N_1,N_2)$) as required.
\end{proof}

\subsection{Numerical simulations}
\label{sub:numerics}

\begin{figure}[b!]
\begin{center}
\setlength{\unitlength}{1cm}
\begin{picture}(12,9.2)
\put(0,0){\includegraphics[height=9cm]{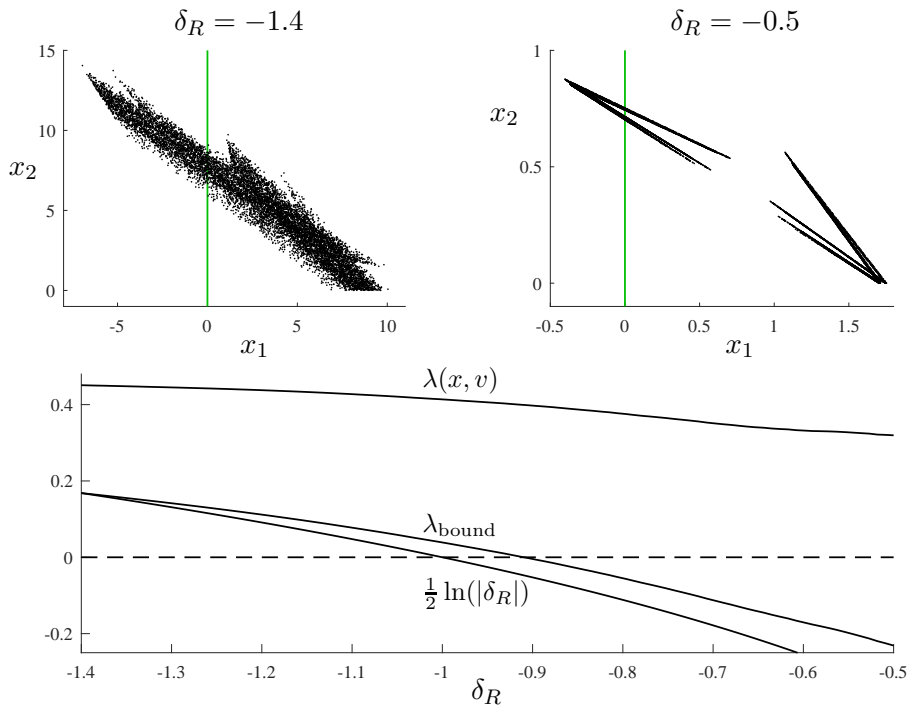}}
\put(6.15,0){\small $\delta_R$}
\put(5.5,1.32){\footnotesize $\frac{1}{2} \ln(|\delta_R|)$}
\put(5.5,2.22){\footnotesize $\lambda_{\rm bound}$}
\put(5.5,4.16){\footnotesize $\lambda(x,v)$}
\put(3.08,4.61){\small $x_1$}
\put(0,6.98){\small $x_2$}
\put(9.53,4.61){\small $x_1$}
\put(6.4,7.7){\small $x_2$}
\put(2.2,8.9){\small $\delta_R = -1.4$}
\put(8.8,8.9){\small $\delta_R = -0.5$}
\end{picture}
\caption{
The lower plot shows the maximal Lyapunov exponent and two lower bounds for the
two-dimensional border-collision normal form \eqref{eq:bcnf}
with \eqref{eq:params} and $\mu = 1$.
The upper plots are phase portraits showing the numerically computed attractor.
\label{fig:numericsPWExpanding_a}
} 
\end{center}
\end{figure}

\begin{figure}[b!]
\begin{center}
\setlength{\unitlength}{1cm}
\begin{picture}(12,9.2)
\put(0,0){\includegraphics[height=9cm]{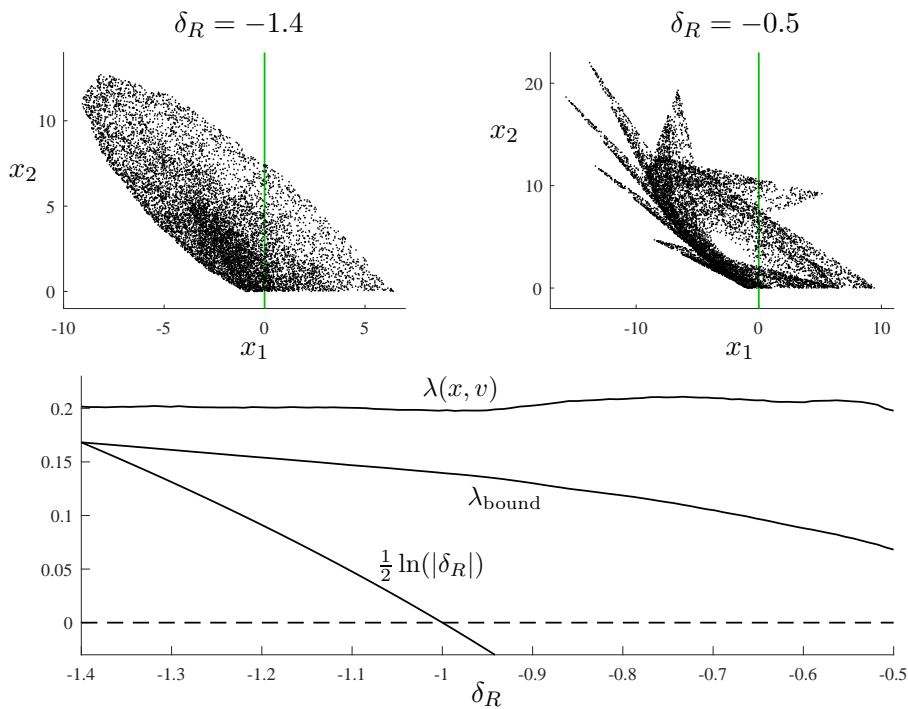}}
\put(6.15,0){\small $\delta_R$}
\put(4.9,1.72){\footnotesize $\frac{1}{2} \ln(|\delta_R|)$}
\put(6.1,2.61){\footnotesize $\lambda_{\rm bound}$}
\put(5.5,4.06){\footnotesize $\lambda(x,v)$}
\put(3.08,4.61){\small $x_1$}
\put(0,6.98){\small $x_2$}
\put(9.53,4.61){\small $x_1$}
\put(6.4,7.5){\small $x_2$}
\put(2.2,8.9){\small $\delta_R = -1.4$}
\put(8.8,8.9){\small $\delta_R = -0.5$}
\end{picture}
\caption{
This repeats Fig.~\ref{fig:numericsPWExpanding_a} for $\mu = -1$.
\label{fig:numericsPWExpanding_b}
} 
\end{center}
\end{figure}

Here we illustrate Theorem \ref{th:main} with the two-dimensional border-collision normal form \eqref{eq:bcnf}.
We found in \S\ref{sub:OmegaNumerically} that with \eqref{eq:params} and $\mu = 0$
the fixed point $x = \b0$ appears to be asymptotically stable for all $-1.46 \le \delta_R \le -0.41$.
Then Theorem \ref{th:attractingSet} implies that the map has an attractor near $x = \b0$ 
for sufficiently small $|\mu|$.
But \eqref{eq:bcnf} is piecewise-linear---the structure of the dynamics
is independent of the magnitude of $\mu$---hence
there exists a bounded attractor for all $\mu \in \mathbb{R}$.

Numerical investigations suggest that this attractor is unique.
For the two particular values $\delta_R = -1.4$ and $\delta_R = -0.5$
the attractor is shown in Fig.~\ref{fig:numericsPWExpanding_a} for $\mu = 1$
and Fig.~\ref{fig:numericsPWExpanding_b} for $\mu = -1$.
These figures also show numerically computed values for
the Lyapunov exponent $\lambda(x,v)$ \eqref{eq:lyapPWS}
and the lower bound $\lambda_{\rm bound}$ \eqref{eq:lambdabound}.
For each value of $\delta_R$ these were computed
from $10^6$ iterates of the forward orbit of $x = \b0$ with the first $100$ (transient) iterates removed.
For the computation of $\lambda(x,v)$ we used $v = \begin{bmatrix} 1 \\ 0 \end{bmatrix}$.

By Theorem \ref{th:main} we expect $\lambda(x,v) > \lambda_{\rm bound}$,
and this is indeed the case.
Indeed $\lambda(x,v) > 0$ for all values of $\delta_R$ in
Figs.~\ref{fig:numericsPWExpanding_a} and \ref{fig:numericsPWExpanding_b}
suggesting the map has a chaotic attractor for all such $\delta_R$ and all $\mu \ne 0$.

As discussed in Remark \ref{re:bound3} we can construct a simpler bound
that does not require knowledge of the forward orbit of $x$.
Assuming $\ell + r = 1$ we have
$\lambda_{\rm bound} \ge \frac{1}{d} \ln(\min(a_L,a_R))$.
Since \eqref{eq:bcnf} is piecewise-linear, $a_L = |\delta_L|$ and $a_R = |\delta_R|$.
Here $|\delta_L| \ge |\delta_R|$ thus
$\lambda_{\rm bound}$ is bounded by $\frac{1}{d} \ln(|\delta_R|)$
which we have also plotted.

\section{Discussion}
\label{sec:conc}
\setcounter{equation}{0}

Chaotic attractors of piecewise-smooth maps are useful in cryptography \cite{KoLi11}
but undesirable in most engineering and control applications \cite{ZhMo03}.
In both settings it is helpful to understand parameter regions where chaotic attractors exist.

In this paper we have shown how the existence of a topological attractor
follows from the asymptotic stability of a fixed point on a switching manifold.
It is well known that stability can often be established by constructing a Lyapunov function.
In particular there are well-established methods by which
the existence of a piecewise-quadratic Lyapunov function can be verified \cite{Jo03,LiAn09}.
However, these methods fail in some instances for which the fixed point is stable
because only a limited class of Lyapunov functions is considered.

For this reason, following \cite{AtLa14}, here we advocate
condition \eqref{it:3} of Theorem \ref{th:stability} for demonstrating stability.
This condition characterises asymptotic stability exactly and, as discussed in \S\ref{sub:OmegaNumerically},
is readily amenable to an accurate and efficient numerical implementation for piecewise-linear maps.
We achieved this here for the two-dimensional border-collision normal form with $\mu = 0$
and showed that the origin can be stable even if both pieces of the map are area-expanding.
This is possible because the map is non-invertible over the given parameter range
and the expansion competes with the contractive effect of folding at the switching manifold.

In \S\ref{sub:bound} we obtained the lower bound \eqref{eq:lambdabound} on the maximal Lyapunov exponent.
For the two-dimensional border-collision normal form with $\mu \ne 0$,
if $\nu(\Sigma) = 0$ (where $\nu$ is the invariant probability measure of an attractor),
which appears to be the case for the four numerically computed attractors shown in
Figs.~\ref{fig:numericsPWExpanding_a} and \ref{fig:numericsPWExpanding_b},
then we expect to have $\lambda_{\rm bound} \ge \frac{1}{2} \ln \big( {\rm min}(|\delta_L|,|\delta_R|) \big)$.
This immediately gives $\lambda_{\rm bound} > 0$ in the area-expanding case
and thus chaos in the sense of a positive Lyapunov exponent.
The necessity of the assumption $\nu(\Sigma) = 0$ is seen by simply putting $\mu = 0$.
In this case $\nu(\Sigma) = 1$, where $\nu$ is the Dirac measure corresponding the fixed point $x = \b0$,
and this point may be stable.

In order to reveal the full power of Theorem \ref{th:main}
it remains to apply bounds on the values of $\ell$ and $r$
obtained from restrictions to the possible symbolic dynamics and
apply the bound \eqref{eq:lambdabound}
to attractors for which $0 < \nu(\Sigma) < 1$,
but examples of this are not known for the border-collision normal form.
It also remains to obtain tighter bounds on maximal Lyapunov exponent
by using more information about $A_L$ and $A_R$ than simply their determinants.

\section*{Acknowledgements}

This work was supported by Marsden Fund contract MAU1809, managed by Royal Society Te Ap\={a}rangi.


\end{document}